\theoremstyle{plain}
\newtheorem{thm}{Theorem}
\newtheorem{cor}[thm]{Corollary}
\newtheorem{lem}[thm]{Lemma}
\newtheorem{prop}[thm]{Proposition}
\newtheorem{Def}[thm]{Definition}
\newcommand{\ep}{\varepsilon}
\newcommand{\al}{\alpha}
\newcommand{\Sig}{\Sigma}
\newcommand{\vp}{\varphi}
\newcommand{\bs}{\backslash}
\newcommand{\ol}{\overline}
\newcommand{\ti}{\widetilde}
\newcommand{\Lip}{\operatorname{Lip}}
\newcommand{\lip}{\operatorname{lip}}
\newcommand{\rg}{\operatorname{range}}
\newcommand{\loc}{\operatorname{loc}}
\newcommand{\spn}{\operatorname{span}}
\newcommand{\N}{{\mathbb N}}
\newcommand{\R}{{\mathbb R}}
\newcommand{\C}{{\mathbb C}}
\newcommand{\cZ}{{\mathcal Z}}
\newcommand{\cA}{{\mathcal A}}
\newcommand{\cF}{{\mathcal F}}
\newcommand{\cX}{{\mathcal X}}
\begin{document}

\title{Order isomorphisms on function spaces}

\begin{abstract}
The classical theorems of Banach and Stone \cite{B, St}, Gelfand and Kolmogorov \cite{GK} and Kaplansky \cite{K} show that a compact Hausdorff space $X$ is uniquely determined by the linear isometric structure, the algebraic structure, and the lattice structure, respectively, of the space $C(X)$.  In this paper, it is shown that for rather general subspaces $A(X)$ and $A(Y)$ of $C(X)$ and $C(Y)$ respectively, any linear bijection $T: A(X) \to A(Y)$ such that $f \geq 0$ if and only if $Tf \geq 0$ gives rise to a homeomorphism $h: X \to Y$ with which $T$ can be represented as a weighted composition operator.  The three classical results mentioned above can be derived as corollaries. Generalizations to noncompact spaces and other function spaces such as spaces of uniformly continuous functions, Lipschitz functions and differentiable functions are presented.
\end{abstract}

\author{Denny H.\ Leung}
\address{Department of Mathematics, National University of Singapore, Singapore 119076}
\email{matlhh@nus.edu.sg}
\author{Lei Li}
\address{School of Mathematical Sciences and LPMC, Nankai University, Tianjin, 300071, China}
\email{leilee@nankai.edu.cn}
\thanks{Research of the first author was partially supported by AcRF project no.\ R-146-000-157-112}

\maketitle

\section{Introduction}

A well established area of research seeks to determine the relationship between the structure of a set and the properties of certain function spaces defined on it.  A closely related question is to determine the general form of an operator mapping between various function spaces.  In the case of spaces of continuous functions, a vast literature exists. A good summary of the classical results is the text of Gillman and Jerison \cite{GJ}.  From the classical period, three results in this area stand out; namely the theorems of Banach and Stone, Gelfand-Kolmogorov and Kaplansky (see Corollary \ref{cor5.2} below).  When $X$ and $Y$ are compact Hausdorff spaces, these results determine the precise forms of the norm isometries, algebra isomorphisms and vector lattice isomorphisms between $C(X)$ and $C(Y)$ respectively.  In particular, the existence of any one of these three types of operators lead to homeomorphism between $X$ and $Y$.  More recent results of Banach-Stone type are found in \cite{A1, A2, GaJ}, for example.

Kadison \cite{Ka} showed that a linear order isomorphism $T$ between
two $C^*$-algebras which maps the identity to the identity is a
$C^*$-isomorphism, i.e., $T$ satisfies $T(x^2) = (Tx)^2$ for all
$x$. In the commutative case, it follows that for any compact
Hausdorff spaces $X$ and $Y$, any linear order isomorphism from
$C(X)$ onto $C(Y)$ that maps the constant function $1$ to the
constant function $1$ is an algebra isomorphism. The theorem of
Gelfand-Kolmogorov then implies that $T$ is a composition operator.
Order isomorphisms, even nonlinear ones, have been studied by
various authors, see, e.g.,  \cite{CS1, CS2, CS3,  GaJ, GaJ2,
JV2008, JV2009}.  These results concern order isomorphisms between
specific function spaces.  Moreover, the spaces considered are all
either lattices or algebras of functions.  One of the aims of the
present paper is to provide a unified treatment of linear order
isomorphisms within a general framework.  In particular, our results
apply to all unital function lattices that separate points from
closed sets and many function algebras.

For quite general subspaces $A(X)$ and $A(Y)$ of $C(X)$ and $C(Y)$
respectively, where $X$ and $Y$ are compact Hausdorff spaces,
Theorem \ref{thm0} in \S\ref{sec1} determines the precise form of a
linear order isomorphism $T: A(X) \to A(Y)$ and shows that the
existence of such a map leads to homeomorphism of $X$ and $Y$.  The
classical results cited above can all be subsumed under this
theorem. We also provide an example of a space to which the theorem
applies which is neither a lattice nor an algebra.

In \S \ref{sec2}, we apply a Stone-\v{C}ech like compactification
procedure  to extend Theorem \ref{thm0} to noncompact spaces. As a
result, existence of a linear order isomorphism $T:A(X)\to A(Y)$
gives rise to a homeomorphism between some compactifications of $X$
and $Y$ respectively.  In \S \ref{sec3}, we show that under certain
circumstances, the homeomorphism obtained restricts to a
homeomorphism between $X$ and $Y$.

Because much of the paper is concerned with maps preserving order, we consider only {\em real} vector spaces.

The authors wish to thank the referee whose astute comments on the
first submission of this paper prompted much further reflection and
great improvements in both the content and exposition of the paper.

\section{Order isomorphisms of spaces of continuous functions on compact Hausdorff spaces}\label{sec1}

Let $X$ be a topological space and let $A(X)$ be a vector subspace
of $C(X)$. $A(X)$ is said to {\em separate points from closed sets}
if given $x\in X$ and  a closed set $F$ in $X$ not containing $x$,
there exists $f \in A(X)$ such that $f(x) = 1$ and $f(F) \subseteq
\{0\}$. If, in addition, $f$ can be chosen to have values in
$[0,1]$, then we say that $A(X)$ {\em precisely separates points
from closed sets}. It is clear that any sublattice of $C(X)$ that
separates points from closed sets does so precisely. Let $Y$ be a
topological space and let $A(Y)$ be a vector subspace of $C(Y)$, a
linear bijection $T:A(X) \to A(Y)$ is an {\em order isomorphism} if
$f \geq 0$ if and only if $Tf \geq 0$. The aim of this section is to
prove the following theorem.

\begin{thm}\label{thm0}
Let $X$, $Y$ be compact Hausdorff spaces and let $A(X)$ and $A(Y)$ be subspaces of $C(X)$ and $C(Y)$ respectively that contain the constant functions and precisely separate points from closed sets. If $T: A(X)\to A(Y)$ is a  linear order isomorphism, then there is a homeomorphism $h:X \to Y$ such that $Tf = T1_X\cdot f\circ h^{-1}$ for all $f \in A(X)$.
\end{thm}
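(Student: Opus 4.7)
The plan is to extract, for each $y\in Y$, a support point $x_y\in X$ such that $Tf(y)=T1_X(y)\,f(x_y)$, and then to show that $h^{-1}(y):=x_y$ defines the required homeomorphism. As a preliminary, I would show $T1_X>0$ on $Y$: the inequalities $-\|f\|_\infty 1_X\le f\le\|f\|_\infty 1_X$ give $|Tf|\le\|f\|_\infty T1_X$, so a zero of $T1_X$ would be a common zero of every $Tf$, contradicting $1_Y\in\rg T$; symmetrically, $T^{-1}1_Y>0$ on $X$.

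For each $y\in Y$, the candidate support set is
\[
 K_y:=\bigcap\bigl\{\{x\in X:f(x)=0\}:f\in A(X),\ f\ge 0,\ Tf(y)=0\bigr\}.
\]
Compactness of $X$ reduces nonemptiness of $K_y$ to the finite case: for $f_1,\dots,f_n\ge 0$ with $Tf_i(y)=0$, the sum $f=f_1+\cdots+f_n$ satisfies $Tf(y)=0$ and $\{f=0\}=\bigcap_i\{f_i=0\}$, and were this empty, compactness would give $f\ge\ep 1_X$ for some $\ep>0$, whence $Tf\ge\ep T1_X>0$, a contradiction.

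The central step is to show that for each $x\in K_y$ and each $g\in A(Y)$, $T^{-1}g(x)=T^{-1}1_Y(x)\cdot g(y)$. For $g\ge 0$ with $g(y)=0$, setting $f:=T^{-1}g\ge 0$ yields $Tf(y)=g(y)=0$, so $f(x)=0$ by $x\in K_y$. To drop the sign restriction, write $g^*:=g-g(y)1_Y$, which vanishes at $y$; for each $\ep>0$, invoke the \emph{precise} separation property of $A(Y)$ to pick $\tilde g_\ep\in A(Y)$ with $0\le\tilde g_\ep\le 1$, $\tilde g_\ep(y)=0$, and $\tilde g_\ep\equiv 1$ on the closed set $\{|g^*|\ge\ep\}$ (which misses $y$). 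Then $\pm g^*\le\ep\,1_Y+\|g^*\|_\infty\tilde g_\ep$; since $T^{-1}(\cdot)(x)$ is positive and linear on $A(Y)$ and $T^{-1}\tilde g_\ep(x)=0$ by the easy case, I get $|T^{-1}g^*(x)|\le\ep\,T^{-1}1_Y(x)$, and sending $\ep\downarrow 0$ finishes the identity.

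This identity forces $K_y$ to be a singleton: if $x_1,x_2\in K_y$, then $f(x_1)/T^{-1}1_Y(x_1)=g(y)=f(x_2)/T^{-1}1_Y(x_2)$ for every $f=T^{-1}g\in A(X)$, and choosing $f\in A(X)$ with $f(x_1)=1$, $f(x_2)=0$ via precise separation on $X$ would give $1/T^{-1}1_Y(x_1)=0$, absurd. Writing $\{x_y\}=K_y$ and substituting $f=1_X$ in $f(x_y)=T^{-1}1_Y(x_y)\,Tf(y)$ shows $T^{-1}1_Y(x_y)=1/T1_X(y)$, whence $Tf(y)=T1_X(y)f(x_y)$. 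Running the same construction with $T^{-1}$ in place of $T$ produces $h:X\to Y$ inverse to $h^{-1}$, and because $A(X)$ separates points from closed sets the weak topology it induces on $X$ coincides with the original topology, yielding continuity of $h^{-1}$ and hence the desired homeomorphism between compact Hausdorff spaces. The main obstacle is the $\ep$-approximation in the third paragraph, where the word ``precisely'' (as opposed to mere separation) is crucial.
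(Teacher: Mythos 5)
Your proposal is correct and follows essentially the same route as the paper: the family of zero sets with the finite intersection property (nonemptiness via summing and comparing with $\ep 1_X$), positivity of $T1_X$ and $T^{-1}1_Y$, a bump-function $\ep$-approximation exploiting precise separation to get the representation formula, and the weak topology generated by $A(X)$ for continuity. The only differences are organizational --- you derive the representation formula on all of $K_y$ first and deduce that $K_y$ is a singleton from it, whereas the paper establishes the singleton property (via a symmetry lemma) before proving the formula --- and your appeal to symmetry for the bijectivity of $h$ deserves the one-line verification that $y_{x_y}=y$, which follows from the two representation formulas together with separation.
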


The proof is divided into a number of steps listed below, from Proposition \ref{prop1} to Proposition \ref{prop4}. If $f \in A(X)$ or $A(Y)$, let $Z(f) = \{f = 0\}$.

\begin{prop}\label{prop1}
For any $x_0 \in X$, let
\[ \cZ_{x_0} = \{Z(Tf): f \in A(X), f \geq 0, f(x_0) = 0\}.\]
Then $\cZ_{x_0}$ has the finite intersection property.
\end{prop}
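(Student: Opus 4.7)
My plan is to reduce the finite intersection property for $\cZ_{x_0}$ to a single-function statement, and then prove that single statement by exploiting the compactness of $Y$ together with the strict positivity of $T1_X$.

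First, I would note that if $f_1,\dots,f_n \in A(X)$ each satisfy $f_i \geq 0$ and $f_i(x_0)=0$, then $f := f_1+\cdots+f_n$ lies in $A(X)$ and satisfies the same two conditions. By linearity, $Tf = Tf_1+\cdots+Tf_n$, and since $T$ is an order isomorphism each $Tf_i\geq 0$. Therefore, any point $y_0 \in Z(Tf)$ automatically satisfies $Tf_i(y_0)=0$ for every $i$ (a finite sum of nonnegative reals vanishes iff each summand does), so $y_0 \in \bigcap_{i=1}^n Z(Tf_i)$. This reduces the problem to showing: \emph{for every $f \in A(X)$ with $f\geq 0$ and $f(x_0)=0$, the zero set $Z(Tf)$ is nonempty.}

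Before attacking this, I would establish the auxiliary fact that $T1_X > 0$ everywhere on $Y$. For any $g \in A(X)$, compactness of $X$ gives some $M>0$ with $-M\cdot 1_X \leq g \leq M\cdot 1_X$; applying the order isomorphism yields $-M\cdot T1_X \leq Tg \leq M\cdot T1_X$. If $T1_X(y_0)=0$ for some $y_0 \in Y$, this would force $(Tg)(y_0) = 0$ for every $g \in A(X)$. But $T$ is surjective onto $A(Y)$, which contains the constant function $1_Y$; taking $g = T^{-1}1_Y$ gives $1 = 1_Y(y_0) = 0$, a contradiction.

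Now I would prove the reduced claim by contradiction. Suppose $f\geq 0$, $f(x_0)=0$, yet $Tf$ never vanishes on $Y$. Since $T$ is order-preserving, $Tf \geq 0$, so $Tf > 0$ on $Y$, and compactness of $Y$ yields $\varepsilon > 0$ with $Tf \geq \varepsilon$ pointwise. By the auxiliary fact together with compactness of $Y$, there is $M>0$ with $T1_X \leq M$ on $Y$. Setting $c := \varepsilon/M > 0$, we get $Tf \geq c\cdot T1_X$, i.e., $T(f - c\cdot 1_X) \geq 0$. Since $T$ is an order isomorphism, $f \geq c\cdot 1_X$ on $X$, contradicting $f(x_0)=0$. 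Therefore $Z(Tf)\neq\emptyset$, completing the proof.

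The main obstacle is really the auxiliary step on strict positivity of $T1_X$: everything else is a straightforward play on linearity, order-preservation, and compactness. The key idea is that $T1_X$ serves as a strictly positive reference function on $Y$ that lets one convert a uniform pointwise lower bound on $Tf$ into a comparison with $T1_X$ which can then be pulled back through $T^{-1}$.
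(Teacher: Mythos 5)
Your proof is correct and follows essentially the same route as the paper's: sum the $f_i$, and show $Z(Tf)\neq\emptyset$ by comparing $Tf$ with $\varepsilon\, T1_X$ and pulling the inequality back through $T^{-1}$. The only difference is your auxiliary step establishing $T1_X>0$ everywhere (which is the paper's later Proposition \ref{prop3.5}); it is correct but not actually needed here, since your comparison $Tf\geq c\cdot T1_X$ uses only the upper bound $T1_X\leq M$, which already follows from continuity on the compact space $Y$.
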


\begin{proof}
Suppose that $f_i \in A(X)$, $f_i \geq 0$, and $f_i(x_0) = 0$, $1
\leq i \leq n$. Let $f = \sum^n_{i=1}f_i$.
Then $f \in A(X)$, $f \geq 0$, and $f(x_0) = 0$.
In particular, $Tf \geq 0$.
If $Z(Tf) = \emptyset$, that is, $(Tf)(y)>0$ for all $y\in Y$, there exists $\ep > 0$ such that $Tf - \ep T1_X\geq 0$.
Thus $f - \ep 1_X \geq 0$, which is manifestly untrue. Let $y_0 \in
Y$ be such that $Tf(y_0) = 0$. Then $\sum^n_{i=1}Tf_i(y_0) = 0$.
Since $Tf_i \geq 0$,  $Tf_i(y_0) = 0$, $1 \leq i \leq n$.  Thus $y_0
\in \cap^n_{i=1}Z(Tf_i)$.
\end{proof}

Define $\cZ_{y_0}$ similarly for $y_0 \in Y$, using the operator
$T^{-1}$ in place of $T$. By Proposition \ref{prop1},
$\cap\cZ_{x_0}$ and $\cap\cZ_{y_0}$ are nonempty for all $x_0\in X,
y_0 \in Y$.

\begin{prop}\label{prop2}
Let $x_0 \in X$ and $y_0 \in Y$. Then $y_0 \in \cap\cZ_{x_0}$ if and
only if $x_0 \in \cap\cZ_{y_0}$.
\end{prop}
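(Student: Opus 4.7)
The plan is to establish the forward implication $y_0\in\cap\cZ_{x_0}\Longrightarrow x_0\in\cap\cZ_{y_0}$; the converse then follows by symmetry, since $(Y,A(Y),T^{-1})$ satisfies the same hypotheses as $(X,A(X),T)$, and the definition of $\cZ_{y_0}$ is the mirror image via $T^{-1}$.

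A preliminary observation I will set out first is that $T1_X>0$ on all of $Y$. Indeed, for every $f\in A(X)$ the bounds $-\|f\|_\infty 1_X\leq f\leq\|f\|_\infty 1_X$ combined with order preservation give $|Tf|\leq\|f\|_\infty\,T1_X$ pointwise on $Y$. If $T1_X$ vanished at some $y_\ast\in Y$, this would force $u(y_\ast)=0$ for every $u\in A(Y)$ (using $u=Tf$ and surjectivity of $T$), contradicting $1_Y(y_\ast)=1$.

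For the main argument, fix $g\in A(Y)$ with $g\geq0$ and $g(y_0)=0$, and set $f=T^{-1}g\geq0$; the goal is to show $f(x_0)=0$. Suppose for contradiction that $f(x_0)=c>0$. By continuity the set $U=\{x\in X:f(x)>c/2\}$ is an open neighborhood of $x_0$, so $F:=X\setminus U$ is closed and does not contain $x_0$. Precise separation of points from closed sets yields $\phi\in A(X)$ with $\phi(x_0)=1$, $\phi|_F\equiv0$, and $0\leq\phi\leq1$. A case check on $U$ versus $F$ shows $f\geq(c/2)\phi$ everywhere, hence $Tf\geq(c/2)T\phi$ by order preservation. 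Evaluating at $y_0$ gives $0=g(y_0)\geq(c/2)T\phi(y_0)\geq0$, so $T\phi(y_0)=0$. On the other hand, $1_X-\phi\in A(X)$ is non-negative and vanishes at $x_0$, so the standing hypothesis $y_0\in\cap\cZ_{x_0}$ yields $T(1_X-\phi)(y_0)=0$, i.e.\ $T1_X(y_0)=T\phi(y_0)=0$. This contradicts the preliminary observation, completing the proof.

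The main obstacle I anticipate is recognising the need for the preliminary positivity statement $T1_X>0$; without it, the argument stops one step short of a contradiction. The other crucial point is that the separation must be \emph{precise} rather than merely by some positive function: the bound $0\leq\phi\leq 1$ is exactly what makes the inequality $(c/2)\phi\leq f$ valid on $U$ (where $f$ exceeds $c/2$ only barely) while $\phi$ still attains the value $1$ at $x_0$ needed for $1_X-\phi$ to vanish there.
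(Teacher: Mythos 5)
Your proof is correct, but it follows a genuinely different route from the paper's. The paper argues by contradiction using the nonemptiness of $\cap\cZ_{y_0}$ (Proposition \ref{prop1} applied to $T^{-1}$): it picks $x_1\in\cap\cZ_{y_0}$ with $x_1\neq x_0$, separates $x_1$ from $x_0$ by an $f$ with range in $[0,1]$, $f(x_1)=1$, $f(x_0)=0$, and then notes that $y_0\in\cap\cZ_{x_0}$ forces $Tf(y_0)=0$, whence $x_1\in\cap\cZ_{y_0}$ forces $f(x_1)=T^{-1}(Tf)(x_1)=0$, a contradiction. You instead verify membership in $\cap\cZ_{y_0}$ directly, one witness $g$ at a time, via the domination $T^{-1}g\geq(c/2)\phi$ and the identity $T\phi(y_0)=T1_X(y_0)$ obtained from applying $y_0\in\cap\cZ_{x_0}$ to $1_X-\phi$. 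Your route requires two extra ingredients that the paper's does not: the strict positivity of $T1_X$ (which the paper only establishes later, as Proposition \ref{prop3.5}, by essentially your argument) and the trick $T\phi(y_0)=T1_X(y_0)$ for a $[0,1]$-valued $\phi$ with $\phi(x_0)=1$ (which the paper deploys in the proof of Proposition \ref{prop4}). In exchange, you avoid invoking Proposition \ref{prop1} for $T^{-1}$, and your argument exposes earlier the mechanism that ultimately yields the weighted-composition representation. Both proofs use precise separation and the presence of constants in an essential way; your closing remarks correctly identify where $0\leq\phi\leq1$ is needed.
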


\begin{proof}
Suppose that $y_0 \in \cap\cZ_{x_0}$ but $x_0 \notin \cap\cZ_{y_0}$.
Choose $x_1 \in \cap\cZ_{y_0}$.  Then $x_1 \neq x_0$. There exists
$f \in A(X)$, $\rg f \subseteq [0,1]$, such that $f(x_1) = 1$ and
$f(x_0) = 0$. Since $y_0 \in \cap\cZ_{x_0}$, $Tf(y_0) = 0$. As $T$
is an order isomorphism, $Tf \geq 0$. Then $x_1 \in \cap\cZ_{y_0}$
implies that $f(x_1) = T^{-1}(Tf)(x_1) = 0$, contrary to the choice
of $f$. The ``if" part of the proposition follows by symmetry.
\end{proof}

\begin{prop}\label{prop3}
$\cap\cZ_{x_0}$ contains exactly one point.
\end{prop}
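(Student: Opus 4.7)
The plan is to prove uniqueness by contradiction. Nonemptiness of $\cap\cZ_{x_0}$ is already in hand from Proposition~\ref{prop1}, so suppose there exist two distinct points $y_0, y_1 \in \cap\cZ_{x_0}$.

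I would next manufacture a test function on the $Y$ side using the separation hypothesis. Since $Y$ is Hausdorff, $\{y_0\}$ is a closed set not containing $y_1$, so because $A(Y)$ precisely separates points from closed sets there is some $g \in A(Y)$ with $\rg g \subseteq [0,1]$, $g(y_1)=1$, and $g(y_0)=0$. In particular $g \geq 0$ and $g$ vanishes at $y_0$.

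Now pull $g$ back by setting $f = T^{-1}g$. Because $T$ is an order isomorphism and $g \geq 0$, we have $f \geq 0$. To show $f(x_0)=0$, I would invoke Proposition~\ref{prop2}: from $y_0 \in \cap\cZ_{x_0}$ we get $x_0 \in \cap\cZ_{y_0}$, which by definition (using $T^{-1}$ in place of $T$) applied to the nonnegative function $g$ with $g(y_0)=0$ yields $T^{-1}g(x_0)=f(x_0)=0$.

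At this point $f \in A(X)$ satisfies $f \geq 0$ and $f(x_0)=0$, so $Z(Tf) \in \cZ_{x_0}$. Since $y_1 \in \cap\cZ_{x_0}$, it follows that $Tf(y_1) = g(y_1) = 0$, contradicting $g(y_1)=1$. The main (and really only) obstacle is recognizing how to couple the two directions of Proposition~\ref{prop2} with the precise separation property so that the separating function $g$ simultaneously witnesses membership in $\cZ_{y_0}$ (forcing $f(x_0)=0$) and lies in $\cZ_{x_0}$ after transport by $T^{-1}$ (forcing $g(y_1)=0$); once that symmetric use is in place the contradiction is immediate.
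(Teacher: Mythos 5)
Your proof is correct and is essentially identical to the paper's argument: both separate the two putative points of $\cap\cZ_{x_0}$ with a function $g\in A(Y)$ vanishing at one and equal to $1$ at the other, use Proposition~\ref{prop2} to conclude $T^{-1}g(x_0)=0$, and then use membership of the second point in $\cap\cZ_{x_0}$ to force $g$ to vanish there, a contradiction. No gaps.
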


\begin{proof}
It has already been observed that $\cap \cZ_{x_0}$ is nonempty.
Suppose that there are distinct points $y_1$ and $y_2$ in
$\cap\cZ_{x_0}$. Choose $g\in A(Y)$ such that $\rg g\subseteq
[0,1]$, $g(y_1) = 0$ and $g(y_2) = 1$. By Proposition \ref{prop2},
$x_0 \in \cap\cZ_{y_1}$. Hence $T^{-1}g(x_0) = 0$.
Since
$y_2 \in \cap\cZ_{x_0}$,
$T(T^{-1}g)(y_2) = 0$.   Thus $g(y_2)=0$, yielding a
contradiction.
\end{proof}

\begin{prop}\label{prop3.5}
$T1_X(y) > 0$ and $T^{-1}1_Y(x)> 0$ for all $x\in X$ and all $y\in Y$.
\end{prop}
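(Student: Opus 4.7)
The plan is to deduce both statements directly from Proposition \ref{prop1}, exploiting the fact that the hypotheses of Theorem \ref{thm0} on $A(X), A(Y)$ and on $T, T^{-1}$ are completely symmetric in $X$ and $Y$; in particular, Proposition \ref{prop1} applies verbatim with the roles of $X$ and $Y$ interchanged and $T^{-1}$ in place of $T$. Since $1_X \geq 0$ and $T$ is an order isomorphism, $T1_X \geq 0$ automatically, so the real content is that $T 1_X$ has no zero on $Y$.

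Suppose toward a contradiction that $T1_X(y_0) = 0$ for some $y_0 \in Y$. Set $g := T1_X \in A(Y)$; then $g \geq 0$ and $g(y_0) = 0$. The dual form of Proposition \ref{prop1}, applied to the single function $g$, yields $Z(T^{-1}g) \neq \emptyset$. But $T^{-1}g = T^{-1}T1_X = 1_X$, and $Z(1_X) = \emptyset$, a contradiction. Hence $T1_X(y) > 0$ for every $y \in Y$. The claim $T^{-1}1_Y(x) > 0$ for all $x \in X$ then follows by the identical argument with the roles of $T, X$ and $T^{-1}, Y$ interchanged.

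There is no genuine obstacle here: the essential work has already been done in Proposition \ref{prop1}, and the present proposition amounts to a self-dual instance of that result applied to the distinguished function $1_X$ (respectively $1_Y$). The only point that needs a moment's thought is verifying that the symmetric version of Proposition \ref{prop1} is legitimate, which is immediate from the fact that $A(X)$ and $A(Y)$ satisfy identical hypotheses and that $T^{-1}$ is itself a linear order isomorphism.
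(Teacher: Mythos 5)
Your proof is correct, but it follows a genuinely different route from the paper's. The paper argues directly: if $T1_X(y_0)=0$, then since $X$ is compact every $f\in A(X)$ satisfies $-c1_X\le f\le c1_X$ for some $c\ge 0$, so order preservation gives $-cT1_X(y_0)\le Tf(y_0)\le cT1_X(y_0)=0$; thus evaluation at $y_0$ annihilates all of $T(A(X))=A(Y)$, contradicting $1_Y\in A(Y)$. You instead feed $g=T1_X$ into the dual, single-function case of Proposition \ref{prop1} (a legitimate move --- the paper itself invokes this dual form when it defines $\cZ_{y_0}$ right after Proposition \ref{prop1}, since the hypotheses on $A(X)$, $A(Y)$, $T$, $T^{-1}$ are symmetric): from $g\ge 0$ and $g(y_0)=0$ you get $Z(T^{-1}g)\ne\emptyset$, yet $T^{-1}g=1_X$ has empty zero set. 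Both arguments ultimately rest on the same compactness mechanism (domination by a multiple of the constant function), but yours recycles work already done in Proposition \ref{prop1} and needs only that $1_X$ vanishes nowhere, whereas the paper's is self-contained and uses surjectivity of $T$ together with the fact that $A(Y)$ contains the constants. Either way the result follows in a few lines; there is no gap in your argument.
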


\begin{proof}
Suppose that there exists $y_0\in Y$ such that $T1_X(y_0) = 0$. For any $f\in A(X)$, there exists $0 \leq c\in \R$ such that $-c 1_X \leq f \leq c1_X$. Then $-cT1_X(y_0)\leq Tf(y_0) \leq c T1_X(y_0)$ for all $f\in A(X)$.  Hence $Tf(y_0) = 0$ for all $f\in A(X)$. This is a contradiction since $T$ maps onto $A(Y)$ and $A(Y)$ contains all constant functions.
\end{proof}

Define $h: X \to Y$ by $h(x_0) = y_0$, where $\{y_0\} =
\cap\cZ_{x_0}$.

\begin{prop}\label{prop4}
$h$ is a homeomorphism from $X$ onto $Y$ so that
$Tf = T1_X\cdot f\circ h^{-1}$
for all $f \in A(X)$ and all $y \in Y$.
\end{prop}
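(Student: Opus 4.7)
The plan is to first establish the multiplicative formula $Tf = T1_X \cdot f \circ h^{-1}$, and then to deduce the bijectivity and continuity of $h$ from it. Bijectivity is almost immediate: applying Proposition \ref{prop3} with $T^{-1}$ in place of $T$ yields a map $\tilde h: Y \to X$ given by $\{\tilde h(y_0)\} = \cap \cZ_{y_0}$, and Proposition \ref{prop2} says precisely that $\tilde h = h^{-1}$.

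The main technical step is the following key lemma: \emph{if $g \in A(X)$ satisfies $g(x_0) = 0$, then $Tg(h(x_0)) = 0$.} The difficulty is that $A(X)$ is only a vector subspace, so one cannot split $g$ into positive and negative parts within $A(X)$; the order isomorphism $T$ can only be applied usefully to positive $A(X)$-functions. The idea is to sandwich $\pm g$ above by a positive $A(X)$-function that almost vanishes at $x_0$. Fix $\ep > 0$, set $M := \|g\|_\infty$, and let $F := \{x \in X : |g(x)| \ge \ep\}$, a closed set not containing $x_0$. The \emph{precise} separation hypothesis (this is where the $[0,1]$-valued part is essential) yields $\phi \in A(X)$ with $\rg \phi \subseteq [0,1]$, $\phi(x_0) = 1$, and $\phi(F) \subseteq \{0\}$; its complement $\psi := 1_X - \phi$ lies in $A(X)$, has range in $[0,1]$, vanishes at $x_0$, and equals $1$ on $F$. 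A case check on $F$ and on $X \setminus F$ gives $M\psi + \ep \cdot 1_X \pm g \ge 0$, so applying the order isomorphism $T$ yields the pointwise bound $|Tg| \le M \cdot T\psi + \ep \cdot T1_X$ on $Y$. Since $\psi \ge 0$ and $\psi(x_0) = 0$, $Z(T\psi) \in \cZ_{x_0}$, so $h(x_0) \in Z(T\psi)$, i.e., $T\psi(h(x_0)) = 0$. Hence $|Tg(h(x_0))| \le \ep \cdot T1_X(h(x_0))$ for every $\ep > 0$, forcing $Tg(h(x_0)) = 0$.

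Applying the key lemma to $g = f - f(x_0) \cdot 1_X$ for arbitrary $f \in A(X)$ produces $Tf(h(x_0)) = f(x_0) \cdot T1_X(h(x_0))$, which is the desired formula after substituting $y = h(x_0)$. For continuity, Proposition \ref{prop3.5} ensures $T1_X > 0$ on $Y$, so the formula realises $f \circ h^{-1} = Tf/T1_X$ as a continuous function on $Y$ for every $f \in A(X)$. To conclude that $h^{-1}$ is continuous, take any net $y_\alpha \to y$ in $Y$ and any subnet along which $h^{-1}(y_\beta) \to x' \in X$ (possible by compactness of $X$); for each $f \in A(X)$, $f(x') = \lim f(h^{-1}(y_\beta)) = f(h^{-1}(y))$, and since $A(X)$ separates the points of $X$ (a consequence of separating points from closed sets in a $T_1$ space), $x' = h^{-1}(y)$. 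Hence $h^{-1}$ is continuous, and a continuous bijection from compact $Y$ to Hausdorff $X$ is automatically a homeomorphism. The main obstacle is the key lemma, where the precise (rather than merely separating) nature of the separation hypothesis is essential.
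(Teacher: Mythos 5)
Your proof is correct, and it differs from the paper's in two places worth noting. For the formula, you isolate the clean lemma that $g(x_0)=0$ forces $Tg(h(x_0))=0$, proved by the two-sided sandwich $M\psi+\ep 1_X\pm g\ge 0$, where $\psi=1_X-\phi$ is $[0,1]$-valued, vanishes at $x_0$ and equals $1$ on $\{|g|\ge\ep\}$, so that $T\psi(h(x_0))=0$ by the definition of $h$; applying this to $f-f(x_0)1_X$ gives the formula at once. The paper instead argues one-sidedly: it takes a bump $g_1$ peaked at $x_0$ and supported where $f>f(x_0)-\ep$, observes $Tg_1(y_0)=T1_X(y_0)$ because $1_X-g_1\ge 0$ vanishes at $x_0$, deduces $Tf(y_0)\ge T1_X(y_0)(f(x_0)-\ep)$ from $f-m1_X-(f(x_0)-m-\ep)g_1\ge 0$, and then applies the argument to $-f$. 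Both use exactly the same ingredients (precise separation plus $h(x_0)\in Z(Tu)$ for $u\ge 0$ with $u(x_0)=0$); your packaging is tidier here, but note that your sandwich needs $\|g\|_\infty<\infty$, whereas the paper's one-sided estimate is the version that survives in the noncompact setting of Theorem \ref{thm6}. For the topological part the routes genuinely diverge: the paper proves $h$ is continuous directly, by showing that $\{T^{-1}g>0\}$ is an open neighborhood of $x_0$ carried by $h$ into a prescribed neighborhood of $h(x_0)$, while you prove $h^{-1}$ is continuous via compactness of $X$, a subnet argument, and the fact that $A(X)$ separates points; both finish with the compact-to-Hausdorff lemma. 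Your subnet step tacitly invokes the standard fact that a net in a compact space all of whose convergent subnets share the limit $x$ must itself converge to $x$ --- worth making explicit, but it is not a gap.
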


\begin{proof}
The injectivity of $h$ follows from Proposition \ref{prop2}. If $y \in Y$, let $\{x\} = \cap\cZ_{y}$. By Proposition
\ref{prop2}, $y \in \cap\cZ_{x}$.  Thus $h(x) = y$. This shows that
$h$ is surjective.

Suppose that $x_0\in X$ and $y_0 = h(x_0)$.  Let $f \in A(X)$ and
let $m = \min\{f(x): x\in X\}$. Given $\ep > 0$, let $U$ be an open
neighborhood of $x_0$ so that $f(x) > f(x_0) - \ep$ for all $x\in
U$.  There exists $g_1\in A(X)$ such that $\rg g_1 \subseteq [0,1]$,
$g_1(x_0) =1$ and $g_1(x) = 0$ for all $x\notin U$. Since $1_X-g_1 \geq
0$, $(1_X-g_1)(x_0) = 0$ and $y_0\in \cap\cZ_{x_0}$, $T(1_X-g_1)(y_0) =
0$.  Hence $(Tg_1)(y_0) =(T1_X)(y_0)$.
Now \[f - m1_X - (f(x_0)-m -\ep)g_1 \geq 0.\]
Thus
\[(Tf)(y_0) \geq m(T1_X)(y_0) + (f(x_0)-m -\ep)(Tg_1)(y_0),\]
that is, $(Tf)(y_0)\geq (T1_X)(y_0)(f(x_0)-\ep)$.
As $\ep>0$ is arbitrary, $Tf (y_0) \geq (T1_X)(y_0)f(x_0)$.
Applying the argument to $-f$ yields the reverse inequality.
Thus $Tf(y_0) = (T1_X)(y_0)f(x_0)$.

It remains to show that $h$ is a homeomorphism.
Let $x_0 \in X$ and $y_0 = h(x_0)$.
Suppose that $V$ is an open neighborhood of $y_0$ in $Y$.
There exists $g \in A(Y)$ such that $g(y_0) = 1$, $\rg g \subseteq [0,1]$, and $g = 0$ outside $V$.
Since $g \geq 0$, $T^{-1}g \geq 0$.
If $T^{-1}g(x_0) = 0$, then $y_0 \in Z(T(T^{-1}g)) = Z(g)$, contrary to the choice of $g$.
Thus $T^{-1}g(x_0) > 0$.
Therefore, the set $U = \{T^{-1}g > 0\}$ is an open neighborhood of $x_0 \in X$.
Suppose that $x\in U$. By the previous paragraph, \[g(h(x)) = T(T^{-1}g)(h(x)) = (T1_X)(h(x))T^{-1}g(x) > 0. \]
Hence $h(x) \in V$. This proves that $h$ is continuous.
Since $h$ is a continuous bijection between compact Hausdorff spaces, it is a homeomorphism.
\end{proof}

\noindent{\bf Remark.} Applying Theorem \ref{thm0} to the map $T^{-1}$ gives a homeomorphism $k:Y\to X$ such that $T^{-1}g = T^{-1}1_Y\cdot g\circ k^{-1}$.  Because of Proposition \ref{prop2}, $k$ must be $h^{-1}$.

\bigskip

The classical theorems of Banach \cite{B} and Stone \cite{St}, Gelfand and Kolmogorov \cite{GK} and  Kaplansky \cite{K} show that a compact Hausdorff space $X$ is uniquely determined by the linear isometric structure, the algebraic structure, and the lattice structure, respectively, of the space $C(X)$.
These results can be subsumed under Theorem \ref{thm0}.  As usual, $C(X)$ and $C(Y)$ are endowed with their respective supremum norms.

\begin{lem}\label{lem5.1}
Let $X$ and $Y$ be compact Hausdorff spaces.  If $T: C(X) \to C(Y)$ is an onto linear isometry, then $|T1_X| = 1_Y$ and, for any $f\in C(X)$, $f \geq 0$ if and only if $Tf\cdot T1_X \geq 0$.
\end{lem}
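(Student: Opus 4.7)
The plan is to split the lemma into two parts: first establish $|T1_X| = 1_Y$ via an extreme point argument, and then deduce the order characterization by rewriting positivity as a norm inequality that the isometry $T$ transports to $Y$.

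For the first part, recall that the extreme points of the closed unit ball of $C(Y,\R)$ are precisely the continuous functions $g$ with $|g(y)| = 1$ for every $y \in Y$ (on any open set where $|g|<1$ one can add and subtract a small bump). Clearly $1_X$ is such an extreme point of the unit ball of $C(X)$. Since $T$ is an onto linear isometry, it maps the closed unit ball of $C(X)$ bijectively onto that of $C(Y)$, and hence carries extreme points to extreme points. Therefore $T1_X$ is extreme in the unit ball of $C(Y)$, so $|T1_X(y)| = 1$ for all $y \in Y$; in particular $T1_X$ is $\{-1,+1\}$-valued.

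For the second part, the key observation is that for any $f \in C(X)$ and any $c > 0$,
\[
0 \leq f \leq c\, 1_X \quad \Longleftrightarrow \quad \|c\, 1_X - 2f\|_\infty \leq c,
\]
since pointwise $|c - 2t| \leq c$ iff $0 \leq t \leq c$. Applying the isometry $T$ gives $\|c\, 1_X - 2f\|_\infty = \|c\, T1_X - 2Tf\|_\infty$. Using that $T1_X(y) \in \{-1,+1\}$, a separate check in each case yields the pointwise identity
\[
|c\, T1_X(y) - 2 Tf(y)| = |c - 2\, Tf(y)\, T1_X(y)|.
\]
Therefore $0 \leq f \leq c\, 1_X$ is equivalent to $\|c\, 1_Y - 2\, Tf \cdot T1_X\|_\infty \leq c$, which in turn is equivalent to $0 \leq Tf \cdot T1_X \leq c\, 1_Y$. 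Letting $c$ range over all sufficiently large positive reals gives $f \geq 0$ iff $Tf \cdot T1_X \geq 0$: if $f \geq 0$ take $c = \|f\|_\infty$, and conversely if $Tf \cdot T1_X \geq 0$ take $c = \|Tf\|_\infty = \|f\|_\infty$ (the last equality uses $|T1_X| = 1_Y$).

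The main obstacle is the first step, since everything else is a straightforward algebraic rewriting once $T1_X$ is known to be $\pm 1$-valued. A direct perturbation argument on $1_X$ does not immediately yield $|T1_X| = 1_Y$; invoking the extreme-point characterization of the unit ball of $C(Y)$ is the clean way to obtain it.
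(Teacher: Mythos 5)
Your proof is correct. The second half is essentially the paper's own argument: the paper encodes positivity as the norm inequality $\|f - \|f\|1_X\| \leq \|f\|$ and transports it through $T$, while you use the equivalent inequality $\|c1_X - 2f\| \leq c$; both then use $|T1_X|=1_Y$ and the identity $|cT1_X(y)-2Tf(y)| = |c - 2Tf(y)T1_X(y)|$ in the same way. The first half is where you genuinely diverge. The paper proves $|T1_X| = 1_Y$ by hand: assuming $|T1_X(y_0)| < 1-a$ on a neighborhood $V$, it picks a bump $g$ supported in $V$, deduces from $\|T1_X + ag\|\leq 1$ that $T^{-1}g \leq 0$, locates a point where $T^{-1}g = -1$, and derives the contradiction $\|T1_X - g\| = 2$ versus $\|T1_X-g\| < 2$. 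You instead invoke the characterization of the extreme points of the unit ball of $C(Y,\R)$ as the unimodular functions and the fact that a surjective linear isometry preserves extreme points. Your route is shorter and more conceptual, at the cost of importing a standard external fact (whose nontrivial direction is itself a bump perturbation, so the underlying mechanism is the same); the paper's argument is self-contained and uses only Urysohn's lemma and the triangle inequality. Both are valid.
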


\begin{proof}
Since $T$ is an isometry, $\|T1_X\| = 1$.
Suppose that there exists $y_0\in Y$ such that $|T1_X(y_0)|  < 1$.
There exists a neighborhood $V$ of $y_0$ and $a > 0$ such that $|T1_X(y)| < 1-a$ for all $y \in V$.
Choose $g\in C(Y)$ such that $g(y_0) = 1$, $\rg g\subseteq [0,1]$ and $g = 0$ outside $V$.
Then $\|1_X + aT^{-1}g\| = \|T1_X + ag\| \leq 1$. Thus $1+ a T^{-1}g(x) \leq 1$ for all $x\in X$.
Therefore, $T^{-1}g \leq 0$.
As $\|T^{-1}g\| = \|g\| = 1$, there must be some $x_0 \in X$ where $T^{-1}g(x_0) = -1$.
Then
\[ 2 = |(1_X- T^{-1}g)(x_0)| \leq \|1_X - T^{-1}g\| \leq \|1_X\| + \|T^{-1}g\| = 2.\]
Hence $\|T1_X -g\| = \|1_X - T^{-1}g\| = 2$. However, if $y\in V$, then
\[|(T1_X-g)(y)| \leq |T1_X(y)| + \|g\| < 1-a +1 < 2.\]
On the other hand, if $y\notin V$, then $|(T1_X-g)(y)| = |T1_X(y)| \leq \|1_X\| = 1$.
This proves that $\|T1_X-g\|< 2$, contrary to the above.  Therefore, $|T1_X| = 1_Y$.

Given $f \in C(X)$, $f \geq 0$ if and only if $\|f - \|f\|1_X\| \leq \|f\|$. Since $T$ is an isometry, this is equivalent to
 $\|Tf - \|Tf\|T1_X\| \leq \|Tf\|$.
By the above, $|T1_X| = 1_Y$. Thus the final inequality holds if and only if $Tf \cdot T1_X \geq 0$.
\end{proof}

A linear bijection $T:C(X) \to C(Y)$ is
\begin{enumerate}
\item a {\em lattice isomorphism} if $|Tf| = T|f|$ for all $f \in C(X)$;
\item an {\em algebra isomorphism} if $T1_X = 1_Y$ and $T(fg) = Tf\cdot Tg$ for all $f,g\in C(X)$.
\end{enumerate}

\begin{cor}\label{cor5.2}
Let $X$ and $Y$ be compact Hausdorff spaces and let $T: C(X)\to C(Y)$ be a linear bijection.
\begin{enumerate}
\item (Banach-Stone) If $T$ is an isometry, then there is a homeomorphism $h: X\to Y$ and a function $g \in C(Y)$, $|g| = 1_Y$, such that $Tf = g\cdot f\circ h^{-1}$ for all $f\in C(X)$.
\item (Kaplansky) If $T$ is a lattice isomorphism, then  there is a homeomorphism $h: X\to Y$ and a function $g \in C(Y)$, $g(y) > 0$ for all $y\in Y$, such that $Tf = g\cdot f\circ h^{-1}$ for all $f\in C(X)$.
\item (Gelfand and Kolmogorov) If $T$ is an algebra isomorphism, then  there is a homeomorphism $h: X\to Y$  such that $Tf = f\circ h^{-1}$ for all $f\in C(X)$.
\end{enumerate}
\end{cor}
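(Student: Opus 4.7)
The plan is to derive all three parts from Theorem \ref{thm0} applied with $A(X)=C(X)$ and $A(Y)=C(Y)$, which obviously contain the constants and precisely separate points from closed sets (via Urysohn's lemma). The work is to convert each of the three hypotheses into an order isomorphism to which the theorem applies, and then to read off the correct multiplier.

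Part (b) is the most direct. The identity $|Tf|=T|f|$ forces $Tf\ge 0\iff Tf=|Tf|\iff T|f|=Tf\iff f=|f|\iff f\ge 0$ (applying the same reasoning to $T^{-1}$, which is also a lattice isomorphism, for the reverse direction). So $T$ itself is a linear order isomorphism, Theorem \ref{thm0} yields $Tf = T1_X\cdot f\circ h^{-1}$, and Proposition \ref{prop3.5} gives $T1_X>0$; set $g = T1_X$.

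Part (c) is equally quick once one observes that in $C(X)$ (real scalars), $f\ge 0$ if and only if $f=u^2$ for some $u\in C(X)$, namely $u=\sqrt{f}$. An algebra isomorphism therefore preserves squares, hence positivity, in both directions, so Theorem \ref{thm0} applies. The normalization $T1_X=1_Y$ then collapses the formula to $Tf = f\circ h^{-1}$.

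Part (a) is where the real work lies, and Lemma \ref{lem5.1} has been set up precisely to handle it. Define $S\colon C(X)\to C(Y)$ by $Sf = T1_X\cdot Tf$. Since $|T1_X|=1_Y$, multiplication by $T1_X$ is an involutive linear bijection of $C(Y)$, so $S$ is a linear bijection. The equivalence ``$f\ge 0\iff Tf\cdot T1_X\ge 0$'' from Lemma \ref{lem5.1} says exactly that $S$ is an order isomorphism, so Theorem \ref{thm0} gives a homeomorphism $h\colon X\to Y$ with $Sf = S1_X\cdot f\circ h^{-1}$. Now $S1_X = T1_X\cdot T1_X = (T1_X)^2 = 1_Y$ since $|T1_X|=1_Y$, so $T1_X\cdot Tf = f\circ h^{-1}$, and multiplying both sides by $T1_X$ and using $(T1_X)^2=1_Y$ again yields $Tf = T1_X\cdot f\circ h^{-1}$; set $g=T1_X$.

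The only conceptual obstacle is in (a): an isometry need not be order preserving on the nose, so one cannot apply Theorem \ref{thm0} to $T$ directly. Lemma \ref{lem5.1} resolves this by showing that $T$ fails to preserve order only up to a unimodular twist by $T1_X$, which can be absorbed into the definition of $S$. Everything else is a routine unwinding of the conclusion of Theorem \ref{thm0}.
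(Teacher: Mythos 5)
Your proposal is correct and follows essentially the same route as the paper: parts (b) and (c) are handled by verifying directly that $T$ is an order isomorphism and invoking Theorem \ref{thm0} (with Proposition \ref{prop3.5} for the sign of $T1_X$), and part (a) uses Lemma \ref{lem5.1} to twist $T$ by $T1_X$ into an order isomorphism --- your $Sf = T1_X\cdot Tf$ is literally the paper's $\ti{T}f = Tf/T1_X$, since $|T1_X|=1_Y$. The only difference is that you spell out the verifications the paper dismisses as ``clear.''
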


\begin{proof}
Let $g = T1_X$. For case (a), it follows from Lemma \ref{lem5.1} that $|g| = 1_Y$ and that the operator $\ti{T}: C(X)\to C(Y)$ given by $\ti{T}(f) = Tf/g$ is a linear order isomorphism. By Theorem \ref{thm0}, there exists a homeomorphism $h:X\to Y$ such that $\ti{T}f = \ti{T}1_X\cdot f\circ h^{-1}$. It follows easily that $Tf = g\cdot f\circ h^{-1}$. For cases (b) and (c), it is clear that $T$ is a  linear order isomorphism.  Proposition \ref{prop3.5} gives that $g(y) > 0$ for all $y\in Y$. Moreover, $g = 1_Y$ for case (c).
The representation of $T$ follows immediately from Theorem \ref{thm0}.
\end{proof}

\noindent{\bf Remark}. Lemma \ref{lem5.1} may be extended to linear isometries $T: C(X,\C)\to C(Y,\C)$.  As a result, Corollary \ref{cor5.2}(a) for complex isometries may also be derived in a similar manner.

\bigskip

\noindent{\bf Example}. There is a subspace $\cX$ of $C[0,1]$ that contains constants and precisely separates points from closed sets, so that $\cX$ is neither a sublattice nor a subalgebra of $C[0,1]$.

Define $\theta:\R \to \R$ by $\theta(t) = \sin \frac{\pi t}{2}$ and $g:\R\to \R$ by $g(t) = 0, \theta(t), 1$ reespectively if $t \leq 0, 0 < t < 1, t\geq 1$ respectively.    Denote by $t$ the identity function $t\mapsto t$ on $[0,1]$ or $\R$, as the case may be.
If $\cF$ is a set of real-valued functions defined on $[0,1]$ or $\R$, let $g\circ \cF = \{g\circ f: f\in \cF\}$.
Denote by $\cX_1$ and $\Sig_1$ respectively the span of the functions $1$ and $t$ in $C[0,1]$ and $C(\R)$ respectively.
Let $\cX_{n+1} = \spn\{\cX_n \cup g\circ \cX_n\}$ and $\Sig_{n+1} = \spn\{\Sig_n \cup \theta\circ \Sig_n\}$.
Then set $\cX = \cup_n \cX_n$ and $\Sig = \cup_n \Sig_n$.
It is easy to see that $\cX$ is a subspace of $C[0,1]$ and that $\Sig$ is a subspace of $C(\R)$.  Moreover, $\Sig$ consists of real analytic functions on $\R$.
We claim that $\cX$ has the desired properties.

First observe that for any $a < b$ in $[0,1]$, the linear function
$f$ on $[0,1]$ such that $f(a) =0$ and $f(b) = 1$ belongs to
$\cX_1$. Hence $g\circ f \in \cX_2 \subseteq \cX$.  $g\circ f$ has
the property that $g\circ f(t) = 0$ if $t\leq a$, $1$ if $t \geq b$
and $0 < g\circ f(t) < 1$ if $a < t < b$.  Taking differences of two
such functions shows that $\cX$ separates points from closed sets in
$[0,1]$.  By construction, $g\circ f\in \cX$ for any $f\in \cX$.
Thus $\cX$ satisfies conditions (a) and (b) of Definition
\ref{def:adequate} below.  Since $\cX$ consists of bounded
functions, condition (c) is also satisfied.  It follows from Lemma
\ref{lem6.0} that $\cX$ precisely separates points form closed sets
in $[0,1]$.

\begin{lem}\label{lem8}
If $f\in \cX_n$ and $I$ is a nondegenerate interval in $[0,1]$, then there is a nondegenerate interval $J \subseteq I$ and a function $u \in \Sig_n$ such that $f = u$ on $J$.
\end{lem}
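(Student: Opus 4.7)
The plan is to proceed by induction on $n$. The base case $n=1$ is trivial: every element of $\cX_1$ has the form $a + bt$ on $[0,1]$, and $a + bt \in \Sig_1$, so one may take $J = I$ and $u = a + bt$.

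For the inductive step, I write $f \in \cX_{n+1}$ as
$$f = \sum_i c_i f_i + \sum_j d_j (g\circ h_j)$$
with $f_i, h_j \in \cX_n$. Applying the inductive hypothesis to each $f_i$ and each $h_j$ in turn — each application starting from the nondegenerate subinterval produced by the previous one — yields a nondegenerate subinterval $J_0 \subseteq I$ together with functions $u_i, v_j \in \Sig_n$ such that $f_i = u_i$ and $h_j = v_j$ on $J_0$.

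The main obstacle is that $g$ and $\theta$ agree only on $[0,1]$, so it is not enough to replace each $g\circ h_j$ by $\theta\circ v_j$: the analytic extension $v_j$ may take values outside $[0,1]$ on $J_0$. I handle this by processing the index $j$ one at a time, shrinking the working subinterval at each step, and using the fact (already noted in the paper) that every element of $\Sig_n$ is real analytic on $\R$. Given the current nondegenerate subinterval $N$, there are two cases for $v_j$. If $v_j$ is constant on $N$, real analyticity forces $v_j \equiv c$ on all of $\R$, and I set $w_j$ to be the constant $c$, $0$, or $1$ according as $c \in [0,1]$, $c < 0$, or $c > 1$; in each case $\theta\circ w_j \equiv g(c) = g\circ h_j$ on $N$. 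Otherwise $v_j^{-1}(\{0,1\})$ is discrete, so I can pick $t_0$ in the interior of $N$ with $v_j(t_0) \notin \{0,1\}$; continuity then supplies a nondegenerate subinterval $N' \subseteq N$ around $t_0$ on which $v_j$ stays in one of $(0,1)$, $(-\infty,0)$, or $(1,\infty)$, and I set $w_j$ to be $v_j$, $0$, or $1$ respectively, so that $\theta \circ w_j = g \circ v_j = g \circ h_j$ on $N'$.

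After running this procedure for each $j$, I arrive at a nondegenerate subinterval $J \subseteq J_0 \subseteq I$ on which $f$ agrees with $u := \sum_i c_i u_i + \sum_j d_j (\theta\circ w_j)$. Since $u_i, w_j \in \Sig_n$, it follows that $u \in \Sig_{n+1}$, completing the induction.
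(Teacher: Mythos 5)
Your proof is correct and follows essentially the same inductive strategy as the paper's: decompose $f$ over $\cX_n \cup g\circ\cX_n$, shrink to nested nondegenerate subintervals, and reconcile each $g\circ h_j$ with some $\theta\circ w_j$ by a three-regime case analysis on where the argument of $g$ lives. The only (harmless) difference is that you apply the inductive hypothesis to each $h_j$ on the full working interval first and then use real analyticity of $v_j$ to find a subinterval avoiding $\{0,1\}$, whereas the paper localizes first to a subinterval where $f_1$ takes values in $(0,1)$ (or invokes connectedness when $f_1(I_0)\cap(0,1)=\emptyset$) and only then applies the inductive hypothesis.
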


\begin{proof}
Induct on $n$.  The case $n=1$ is trivial. Assume that the result
holds for some $n$ and let $f\in \cX_{n+1}$. Then  $f= f_0 +
\sum^m_{i=1}c_ig\circ f_i$, where $f_i \in \cX_n$, $0\leq i \leq m$.
By the inductive assumption, there exist a nondegenerate interval
$I_0 \subseteq I$ and $u_0 \in \Sig_n$ such that $f_0 = u_0$ on
$I_0$. If $f_1(I_0) \cap (0,1) = \emptyset$, then $g\circ f_1(I_0)
\subseteq \{0,1\}$.  By connectedness of $I_0$ and continuity of
$g\circ f_1$, $g\circ f_1$ takes constant value, say $c_1 (= 0$ or
$1$) on $I_0$.  In this case, let $I_1 = I_0$ and $u_1 = c_1$. Then
$g\circ f_1 = c_1 = \theta\circ u_1$ on $I_1$. Otherwise,
$f_1(I_0)\cap (0,1) \neq \emptyset$.  There is a nondegenerate
interval $I_0' \subseteq I_0$ such that $f_1(I_0') \subseteq (0,1)$.
Then $g\circ f_1 = \theta\circ f_1$ on $I_0'$.  By the inductive
assumption, there exist a nondegenerate interval $I_1 \subseteq
I_0'$ and $u_1 \in \Sig_n$ such that $f_1 = u_1$ on $I_1$. Then
$g\circ f_1 = \theta\circ f_1  = \theta\circ u_1$ on $I_1$. Continue
to choose nondegenerate intervals $I_0 \supseteq I_1 \supseteq
\cdots \supseteq I_m$, $u_0, \dots, u_m \in \Sig_n$ such that
$g\circ f_i = \theta\circ u_i$ on $I_i$, $1\leq i \leq m$, and $f_0
= u_0$ on $I_0$. Then $f= u_0 + \sum^m_{i=1}c_i \theta\circ u_i$ on
$I_m$ and the latter function belongs to $\Sig_{n+1}$.
\end{proof}

\begin{lem}\label{lem8.1}
If $f$ is a real analytic function on $\R$ and $f_{|[0,1]}\in \cX$, then $f\in \Sig$.
\end{lem}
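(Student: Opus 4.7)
The plan is to combine Lemma \ref{lem8} with the identity theorem for real analytic functions. Since $f_{|[0,1]} \in \cX = \cup_n \cX_n$, I would first fix $n$ so that $f_{|[0,1]} \in \cX_n$. Then apply Lemma \ref{lem8} with $I = [0,1]$ to produce a nondegenerate subinterval $J \subseteq [0,1]$ and a function $u \in \Sig_n$ such that $f = u$ on $J$.

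The key observation is that $\Sig$ was constructed to consist of real analytic functions on $\R$ (the paper noted this just after defining $\Sig$, since $\theta(t) = \sin(\pi t/2)$ is entire and $\Sig$ is built from $1$, $t$, and compositions with $\theta$ via linear combinations). Thus $u$ is real analytic on $\R$, and so is $f$ by hypothesis. Since $f$ and $u$ agree on a nondegenerate interval $J$, which contains accumulation points, the identity theorem for real analytic functions on the connected set $\R$ forces $f = u$ everywhere on $\R$. Therefore $f = u \in \Sig_n \subseteq \Sig$.

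There is no real obstacle beyond citing Lemma \ref{lem8} and the identity theorem; the entire content of the argument is that $\Sig$ is a space of analytic functions, while the structural combinatorics required to pass from $\cX_n$ to $\Sig_n$ on a small interval has already been absorbed into Lemma \ref{lem8}. The only thing to double-check is that $\Sig$ indeed lies in the analytic category, which is immediate from induction on the level $n$ in the construction $\Sig_{n+1} = \spn\{\Sig_n \cup \theta \circ \Sig_n\}$, using that sums, scalar multiples, and compositions with the entire function $\theta$ preserve real analyticity.
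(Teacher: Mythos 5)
Your proposal is correct and is essentially the paper's own argument: apply Lemma \ref{lem8} on $I=[0,1]$ to get $u\in\Sig$ agreeing with $f$ on a nondegenerate interval, then invoke the identity theorem for real analytic functions (the paper notes that $\Sig$ consists of real analytic functions). The extra verification you supply, that $\Sig$ lies in the analytic category by induction on $n$, is a detail the paper states without proof but is exactly as you describe.
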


\begin{proof}
Let $f_0 = f_{|[0,1]}$.  By Lemma \ref{lem8}, there is a nondegenerate interval $J$ and $u \in \Sig$ such that $f_0 = u$ on $J$.  Since both $f$ and $u$ are real analytic on $\R$, $f = u$ on $\R$.
\end{proof}

We can now verify the remaining properties of $\cX$ stated above,
namely that $\cX$ is neither a sublattice nor a subalgebra of
$C[0,1]$.  If $\cX$ is an algebra, then $t^2 \in \cX$.  By Lemma
\ref{lem8.1}, $t^2$ (as a function on $\R$) belongs to $\Sig$.
We show  that this is impossible by showing that
$\lim_{t\to\infty}f(t)/t^2 = 0$ for any $f\in \Sig$. Indeed, the
statement holds for any $f\in \Sig_1$.  Inductively, any $f\in
\Sig_{n+1}$ can be written as $f = f_0 +\sum^m_{i=1}c_i\theta\circ
f_i$, where $f_i\in \cX_n$, $0 \leq i \leq m$. By induction,
$\lim_{t\to\infty}f_0(t)/t^2 = 0$.  Since $0\leq \theta\circ f_i\leq
1$, the induction is complete.

Finally, suppose that $\cX$ is a lattice.  Then $t \wedge
\frac{1}{2} \in \cX$.  Say it belongs to $\cX_n$. It is easy to
check via induction that
\[\cX_n = \cX_1 + \spn(g\circ \cX_1) + \cdots + \spn(g\circ \cX_{n-1}).\]
Then we can write $f = f_0 + g\circ f_1 + \cdots + g\circ f_{n-1}$, where $f_0 \in \cX_1$, $f_i \in \cX_i$, $1\leq i \leq n-1$.  Say $f_0 = a + bt$.
By Lemma \ref{lem8}, there exist nondegenerate intervals
$J_1 \subseteq [0,1/2]$, $J_2 \subseteq [1/2,1]$, functions $u_i,v_i\in \Sig$, $1\leq i\leq n-1$, such that
\[ f= \begin{cases}
     a+ bt + \sum^{n-1}_{i=1}\theta\circ u_i & \text{on $J_1$}\\
      a+ bt  +  \sum^{n-1}_{i=1}\theta\circ v_i &\text{on $J_2$}.
   \end{cases}\]
Then $t = a+bt + h_1$ on $J_1$ and $1/2 = a+bt+ h_2$ on $J_2$, where $h_1$ and $h_2$ are bounded real analytic functions.  Thus these equations hold on $\R$.
Dividing both equations by $t$ and taking limits as $t\to \infty$ gives $1 = b$ and $0 = b$, which is absurd.

\section{Order isomorphisms on completely regular spaces}\label{sec2}

In this section, we employ the method of compactification to extend
results in the previous section to more general function spaces. Let
$X$ be a (Hausdorff) completely regular space and let $A(X)$ be a
subspace of $C(X)$ that separates points from closed sets. Denote by
$\R_\infty$ the interval $[-\infty,\infty]$ with the order topology.
The map $i : X\to \R^{A(X)}_\infty$, $i(x)(\vp) = \vp(x)$, is a
homeomorphic embedding. Let $\cA X$ be the closure of $i(X)$ in
$\R^{A(X)}_\infty$.  Then $\cA X$ is a compact Hausdorff space.
Identify $X$ with $i(X)$ and regard $X$ as a subspace of $\cA X$.
For each $f\in A(X)$, there is a unique continuous extension
$\hat{f}:\cA X \to \R_\infty$ given by $\hat{f}(x) = x(f)$. When
$A(X) = C(X)$, $\cA X$ coincides with the Stone-\v{C}ech
compactification $\beta X$. In some versions of this type of
compactification, one embeds $\R$ into the one point
compactification $\R \cup\{\infty\}$.  We prefer to use the
compactification $[-\infty,\infty]$ instead since this space is
ordered. Let $g:\R\to \R$ be a function.  We say that $A(X)$ is
$g$-{\em invariant} if $g\circ f \in A(X)$ for all $f\in A(X)$.

\begin{Def}\label{def:adequate}
We will say that a vector subspace $A(X)$ of $C(X)$ is {\em adequate} if
\begin{enumerate}
\item $A(X)$ separates points from closed sets and contains the constant functions;
\item There exists a continuous nondecreasing function $g: \R \to \R$, with $g(t) = 0$ if $t \leq 0$ and $g(t) = 1$ if $t \geq 1$, such that $A(X)$ is $g$-invariant.
\item The positive cone $A(X)_+$ generates $A(X)$, i.e., every $f\in A(X)$ can be written as $f_1 - f_2$, where $f_1$ and $f_2$ are nonnegative functions in $A(X)$.
\end{enumerate}
\end{Def}

Observe that conditions (b) and (c) hold if $A(X)$ is a sublattice
of $C(X)$.  Indeed, condition (c) is obvious for a sublattice.  Take
$g(t)$ to be $0, t$ and $1$ respectively  for $t \leq 0, 0 < t< 1$
and $t \geq 1$ respectively.  Then $A(X)$ is $g$-invariant if $A(X)$
is a sublattice of $C(X)$. Also, condition (c) holds if $A(X)$
consists of bounded functions and contains constants.  Indeed, in
this case, any $f\in A(X)$ can be written as $c1_X - (c1_X - f)$,
where $c \geq 0$ and $c1_X \geq f$.

\begin{lem}\label{lem5.9}
Let $X$ be a completely regular space and let $A(X)$ be an adequate subspace of $C(X)$,
Given $x_0 \in \cA X$, $f\in A(X)$, and a neighborhood $U$ of $x_0(f)$ in $[-\infty,\infty]$, set $V = \{x\in \cA X: x(f) \in U\}$.  Then there exists $h\in A(X)$, with $0 \leq h \leq 1_X$ and $x_0(h) = 0$, such that $W=\{x\in \cA X:x(h) < 1\} \subseteq V$.
\end{lem}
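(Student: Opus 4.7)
The plan is to construct $h$ by composing $f$ with an affine map and then with $g$, exploiting both $g$-invariance and the fact that $A(X)$ is a vector space containing $1_X$. Set $t_0 := x_0(f) \in [-\infty,\infty]$. The construction splits into three cases depending on whether $t_0$ is finite or one of $\pm\infty$; in each, the key intermediate step is to transfer the pointwise identities defining $h$ from $X$ (identified with its dense image in $\cA X$) to all of $\cA X$ by continuity.

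In the main case $t_0\in\R$, I would choose $a,b\in\R$ with $a<t_0<b$ and $(a,b)\subseteq U$ (possible since $U$ is an open neighborhood of $t_0$ in the order topology), and set
\[ h_1 := g\circ\ell_1\circ f,\qquad h_2 := g\circ\ell_2\circ f,\qquad h := h_1+h_2,\]
where $\ell_1(s)=(s-t_0)/(b-t_0)$ and $\ell_2(s)=(t_0-s)/(t_0-a)$. Each $\ell_i\circ f$ lies in $A(X)$ because $A(X)$ is a vector space containing constants, so $h_i\in A(X)$ by $g$-invariance. Since $g$ vanishes on $(-\infty,0]$, we have $h_1=0$ where $f\le t_0$ and $h_2=0$ where $f\ge t_0$, so $h_1$ and $h_2$ have disjoint positive supports and $0\le h\le 1$ on $X$.

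The crucial step is to identify $\hat h_i = g\circ\ell_i\circ\hat f$ on all of $\cA X$. Each $\ell_i$ extends continuously and affinely to $[-\infty,\infty]\to[-\infty,\infty]$, and $g$ extends continuously to $[-\infty,\infty]\to[0,1]$ by setting $g(-\infty)=0$, $g(\infty)=1$ (valid because $g$ is eventually constant on both tails). Thus $g\circ\ell_i\circ\hat f$ is a continuous $[0,1]$-valued function on $\cA X$ agreeing with $\hat h_i$ on the dense set $X$, so the two coincide on $\cA X$; the same density argument gives $\hat h = \hat h_1 + \hat h_2$. Consequently $x_0(h) = g(0)+g(0) = 0$, and if $x\in\cA X$ with $x(f)\notin U$ then $x(f)\notin(a,b)$, so either $\ell_1(x(f))\ge 1$ and $\hat h_1(x)=1$, or $\ell_2(x(f))\ge 1$ and $\hat h_2(x)=1$. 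In either case $x(h)\ge 1$ and $x\notin W$.

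If $t_0=\infty$, I would instead pick $c\in\R$ with $(c,\infty]\subseteq U$ and take $h := g\circ(c+1-f)\in A(X)$; the extension argument then yields $\hat h(x_0)=g(-\infty)=0$ and $\hat h(x)=1$ whenever $x(f)\le c$. The case $t_0=-\infty$ is symmetric. The principal obstacle throughout is the identification $\hat h = g\circ\ell\circ\hat f$ on all of $\cA X$; it rests on the availability of continuous extensions of the affine map and of $g$ to the compactification $[-\infty,\infty]$ (which is precisely why the paper uses $\R_\infty^{A(X)}$ rather than a one-point compactification), combined with density of $X$ in $\cA X$.
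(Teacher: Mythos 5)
Your proof is correct and follows essentially the same route as the paper: both build $h$ from $g$-composites of affine images of $f$ and rely on the identity $x(g\circ u)=g(x(u))$ on $\cA X$, obtained by extending $g$ (and the affine maps) continuously to $[-\infty,\infty]$ and using density of $X$. The only difference is cosmetic: the paper uses the single formula $h=1_X+g\circ f_1-g\circ(f_1+1_X)$ with $f_1=\ep^{-1}(f-a1_X)$, where you use a sum of two one-sided bumps $g\circ\ell_1\circ f+g\circ\ell_2\circ f$.
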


\begin{proof}
Let $g$ be the function given in Definition \ref{def:adequate}.
Define $g(\infty)= 1$ and $g(-\infty) = 0$. Then $x(g\circ f) =
g(x(f))$ for all $x\in \cA X$ and all $f\in A(X)$. First consider
the case where $x_0(f) =a \in \R$. Choose $\ep
> 0$ such that $(a-\ep,a+\ep) \subseteq U$. Set $f_1 =
\ep^{-1}(f-a1_X) \in A(X)$. Then $x_0(f_1) = 0$.  If $x\in \cA X$
and $|x(f_1)| < 1$, then $|x(f) -a| = \ep|x(f_1)| < \ep$. Thus $W' =
\{x\in \cA X: |x(f_1)| < 1\} \subseteq V$. Set $h = 1_X + g \circ
f_1 - g\circ (f_1 + 1_X) \in A(X)$. Using the observation above, we
see that for any $x\in\cA X$, $x(h) = 1 + g(x(f_1)) - g(x(f_1)+1)$.
It is easy to check that $0 \leq h\leq 1_X$, $x_0(h) = 0$, and that
$W \subseteq W' \subseteq V$.

If $x_0(f) = +\infty$ or $-\infty$, the proof is similar.  Assume the former. Choose $0 < m \in \R$ such that $(m,\infty] \subseteq U$ and define $h = 1_X - g\circ (f - (m+1)1_X)$.  We omit the verification that $h$ satisfies the requirements.
\end{proof}

Denote by $C_b(X)$ the subspace of $C(X)$ consisting of the bounded
functions. Let $A_b(X) = A(X) \cap C_b(X)$ and $\hat{A}_b(X) =
\{\hat{f}: f \in A_b(X)\}$. The map $f \mapsto \hat{f}$ is a
bijection from $A_b(X)$ onto the subspace $\hat{A}_b(X)$ of $C(\cA
X)$.  Since $A_b(X)$ contains the constant functions on $X$,
$\hat{A}_b(X)$ contains the constant functions on $\cA X$.

\begin{lem}\label{lem6.0}
Let $X$ be a completely regular space.  If $A(X)$ is an adequate subspace of $C(X)$, then $\hat{A}_b(X)$ precisely separates points from closed sets in $\cA X$.
\end{lem}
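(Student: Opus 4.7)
The plan is to assemble a separating function for $\cA X$ by combining finitely many applications of Lemma~\ref{lem5.9} through the operation $f \mapsto g\circ f$; the $g$-invariance condition in Definition~\ref{def:adequate} is precisely the tool that allows us to do this without assuming $A(X)$ is a lattice or an algebra.

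Fix $x_0 \in \cA X$ and a closed set $F \subseteq \cA X$ with $x_0 \notin F$. Since $\cA X$ carries the subspace topology inherited from the product topology on $\R^{A(X)}_\infty$, I would first produce a basic open neighborhood $V = \bigcap_{i=1}^n V_i$ of $x_0$ disjoint from $F$, where each $V_i = \{x \in \cA X : x(f_i) \in U_i\}$ for some $f_i \in A(X)$ and some open $U_i \ni x_0(f_i)$ in $[-\infty,\infty]$.

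Next, I would invoke Lemma~\ref{lem5.9} on each $(f_i, U_i)$ to obtain $h_i \in A(X)$ with $0 \le h_i \le 1_X$, $x_0(h_i) = 0$, and $W_i := \{x \in \cA X : x(h_i) < 1\} \subseteq V_i$. Because $X$ is dense in $\cA X$ and the extension $\hat{h}_i$ is continuous, the bounds $0 \le h_i \le 1_X$ propagate to $0 \le \hat h_i \le 1$ on all of $\cA X$. Combined with $W_i \subseteq V_i$, this forces $\hat h_i \equiv 1$ on $\cA X \setminus V_i$.

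Now set $\tilde h := h_1 + \cdots + h_n$, a function in $A_b(X)$ with $0 \le \tilde h \le n$. Clearly $x_0(\tilde h) = 0$. For any $x \in F \subseteq \cA X \setminus V = \bigcup_i (\cA X \setminus V_i)$, at least one index $i$ satisfies $\hat h_i(x) = 1$, and therefore $\hat{\tilde h}(x) \ge 1$. Using the $g$-invariance of $A(X)$, define $h := 1_X - g \circ \tilde h \in A_b(X)$; since $g$ takes values in $[0,1]$, so does $h$. The composition $g \circ \tilde h$ extends continuously to $\cA X$ as $g \circ \hat{\tilde h}$ (uniqueness of continuous extension from the dense subset $X$, valid because $\hat{\tilde h}$ takes values in the compact interval $[0,n] \subset \R$), whence $\widehat{g \circ \tilde h} = g \circ \hat{\tilde h}$. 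This yields $\hat h(x_0) = 1 - g(0) = 1$ and $\hat h(x) = 1 - g(\hat{\tilde h}(x)) = 0$ for every $x \in F$, completing the verification that $\hat A_b(X)$ precisely separates points from closed sets.

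The only delicate point is the identity $\widehat{g \circ \tilde h} = g \circ \hat{\tilde h}$, which is what makes the reduction to a single function go through. Its justification requires two observations: boundedness of $\tilde h$ on $X$ (ensuring the extension avoids $\pm\infty$), and density of $X$ in $\cA X$ (ensuring uniqueness of continuous extensions). Apart from this, the main conceptual role is played by $g$-invariance, which substitutes for the lattice/algebra operations one would ordinarily use to take a product or maximum of the functions $\hat h_i$.
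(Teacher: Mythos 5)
Your proposal is correct and follows essentially the same route as the paper: reduce to a basic open neighborhood $\bigcap_{i=1}^n V_i$ of $x_0$ via the product topology, apply Lemma~\ref{lem5.9} to each coordinate, and combine the resulting $h_i$ into the single function $1_X - g\circ\sum_{i=1}^n h_i$ using $g$-invariance. The only difference is that you spell out the (true and worth noting) identity $\widehat{g\circ \tilde h} = g\circ \hat{\tilde h}$, which the paper handles by the observation $x(g\circ f)=g(x(f))$ made in the proof of Lemma~\ref{lem5.9}.
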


\begin{proof}
Let $x_0 \in \cA X$ and $F$ be a closed subset of $\cA X$ not
containing $x_0$. Then $V = \cA X \bs F$ is an open neighborhood of
$x_0$.  Choose $f_1,\dots, f_n \in A(X)$ and open neighborhoods
$U_i$ of $x_0(f_i)$ in $[-\infty,\infty]$ such that $\cap^n_{i=1}V_i
\subseteq V$, where $V_i = \{x\in \cA X: x(f_i) \in U_i\}$. By Lemma
\ref{lem5.9}, there exist $h_i$, $1\leq i\leq n$, such that $0 \leq
h_i \leq 1_X$, $x_0(h_i) = 0$, and $W_i = \{x\in \cA X: x(h_i) < 1\}
\subseteq V_i$. Set $h = 1_X - g\circ \sum^n_{i=1}h_i \in A(X)$.
Then $0 \leq h \leq 1_X$ and $\hat{h}(x_0)  = x_0(h) = 1$. If $x\in
\cA X$, $x\notin V$, then there exists $j$ such that $x \notin V_j$,
and hence $x\notin W_j$. Thus $x(h_j) \geq 1$ and so
$x(\sum^n_{i=1}h_i) \geq 1$. Therefore, $\hat{h}(x) = x(h) = 0$.
\end{proof}

\begin{thm}\label{thm6}
Let $X$ and $Y$ be completely regular spaces. Suppose that $A(X)$ and $A(Y)$ are adequate subspaces of $C(X)$ and $C(Y)$ respectively. If $T: A(X) \to A(Y)$ is a  linear order isomorphism such that $T(A_b(X)) = A_b(Y)$, then there exists a homeomorphism $h :\cA X \to \cA Y$ such that for all $x\in X, y\in Y$, $f\in A(X)$ and $g\in A(Y)$, $\hat{f}(h^{-1}(y)), \hat{g}(h(x)) \in \R$, $Tf= T1_X\cdot\hat{f}\circ {h^{-1}}_{|Y}$ and $T^{-1}g= T^{-1}1_Y\cdot\hat{g}\circ {h}_{|X}$.
\end{thm}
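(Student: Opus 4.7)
The plan is to lift $T$ to the bounded continuous functions on the compactifications $\cA X$ and $\cA Y$, apply Theorem \ref{thm0} there, and then bootstrap the resulting representation from bounded to unbounded elements.

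Since $T(A_b(X))=A_b(Y)$, $T|_{A_b(X)}$ is a linear order isomorphism. Via the bijection $f\mapsto \hat f$ between $A_b(X)$ and $\hat A_b(X)\subseteq C(\cA X)$, it transfers to a linear bijection $\tilde T:\hat A_b(X)\to \hat A_b(Y)$, $\tilde T\hat f=\widehat{Tf}$. Because $X$ is dense in $\cA X$ and $\hat f$ is continuous, $\hat f\ge 0$ if and only if $f\ge 0$, so $\tilde T$ is also an order isomorphism. Lemma \ref{lem6.0} guarantees that $\hat A_b(X)$ and $\hat A_b(Y)$ contain the constants and precisely separate points from closed sets in the compact Hausdorff spaces $\cA X,\cA Y$, so Theorem \ref{thm0} furnishes a homeomorphism $h:\cA X\to \cA Y$ with $\widehat{Tf}=\widehat{T1_X}\cdot \hat f\circ h^{-1}$ for every $f\in A_b(X)$. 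Restricting to $Y\subseteq \cA Y$ yields the representation on $A_b(X)$. Applying the same procedure to $T^{-1}$ and invoking the remark after Proposition \ref{prop4} produces the companion formula $\widehat{T^{-1}g}=\widehat{T^{-1}1_Y}\cdot \hat g\circ h$ for $g\in A_b(Y)$; substituting $f=T^{-1}1_Y$ in the forward formula and evaluating at $y\in Y$ gives the identity $\widehat{T^{-1}1_Y}(h^{-1}(y))=1/T1_X(y)$, which will be needed below.

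The main obstacle is extending to unbounded $f$ while simultaneously verifying $\hat f(h^{-1}(y))\in\R$. By condition (c) it suffices to treat $f\in A(X)_+$. Fix $y_0\in Y$, set $x_0=h^{-1}(y_0)$ and $a=\hat f(x_0)\in[0,\infty]$. For every $\alpha\ge 0$ and $\epsilon>0$ with $\alpha+\epsilon\le a$, a pointwise check on the three regions $\{f\le\alpha\}$, $\{\alpha<f<\alpha+\epsilon\}$, and $\{f\ge\alpha+\epsilon\}$ verifies
\[f\ge \alpha\,g\circ\bigl((f-\alpha 1_X)/\epsilon\bigr),\]
whose right side lies in $A_b(X)$ by $g$-invariance and linearity. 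Applying $T$, using the bounded representation at $y_0$, and noting $g((a-\alpha)/\epsilon)=g(1)=1$, one obtains $Tf(y_0)\ge \alpha\, T1_X(y_0)$ for every admissible $\alpha$. If $a=\infty$, letting $\alpha\to\infty$ forces $Tf(y_0)=\infty$, contradicting $Tf\in C(Y)$; hence $a<\infty$, and then $\alpha\to a$ gives $Tf(y_0)\ge a\, T1_X(y_0)$.

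For the reverse inequality, the analogous lower bound applied to $T^{-1}$ and $Tf\in A(Y)_+$ yields
\[f(x)=T^{-1}(Tf)(x)\ge T^{-1}1_Y(x)\cdot \widehat{Tf}(h(x)) \qquad \text{for all } x\in X.\]
Choose a net $x_\lambda\in X$ converging to $x_0$ in $\cA X$; then $h(x_\lambda)\to h(x_0)=y_0$ in $\cA Y$. Passing to the limit by continuity of $\hat f$, $\widehat{T^{-1}1_Y}$, $\widehat{Tf}$, and $h$, and invoking $\widehat{T^{-1}1_Y}(x_0)=1/T1_X(y_0)$, produces $a\ge Tf(y_0)/T1_X(y_0)$, i.e., $Tf(y_0)\le a\, T1_X(y_0)$. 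Combining the two bounds gives $Tf(y_0)=T1_X(y_0)\hat f(h^{-1}(y_0))$ for every $f\in A(X)_+$, and hence by (c) for every $f\in A(X)$; in particular $\hat f(h^{-1}(y))\in\R$ for all such $f$ and all $y\in Y$. The companion formula for $T^{-1}$ follows by exchanging the roles of $X$ and $Y$.
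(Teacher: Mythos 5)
Your proof is correct and follows essentially the same route as the paper: lift $T$ to $\hat{A}_b(X)\to\hat{A}_b(Y)$, apply Theorem \ref{thm0} on the compactifications, prove the one-sided bound $Tf(y_0)\ge T1_X(y_0)\hat{f}(h^{-1}(y_0))$ for $f\ge 0$ (which simultaneously forces $\hat{f}(h^{-1}(y_0))<\infty$), and get the reverse bound by symmetry plus a limiting net from $X$. The only genuine difference is local: where the paper minorizes $f$ by $a\,g$ with $g$ a bump function at $x_0$ supplied by Lemma \ref{lem6.0}, you use the truncation $\alpha\, g\circ\bigl((f-\alpha 1_X)/\epsilon\bigr)\le f$ built directly from the $g$-invariance in Definition \ref{def:adequate}; both are valid.
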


\begin{proof}
It is clear that $T$ induces a linear order isomorphism $\hat{T}:
\hat{A}_b(X) \to \hat{A}_b(Y)$ given by $\hat{T}\hat{f} =
(Tf)\hat{}$. The spaces $\hat{A}_b(X)$ and $\hat{A}_b(Y)$ are
subspaces of $C(\cA X)$ and $C(\cA Y)$ respectively. By Lemma
\ref{lem6.0}, $\hat{A}_b(X)$ and $\hat{A}_b(Y)$ precisely separate
points from closed sets. Therefore, by Theorem \ref{thm0} and the
remark following Proposition \ref{prop4}, there is a homeomorphism
$h:\cA X \to \cA Y$ such that $\hat{T}\hat{f} = (T1_X)^{\hat{}}\cdot
\hat{f}\circ h^{-1}$ for all $\hat{f}\in \hat{A}_b(X)$ and
$\hat{T}^{-1}\hat{g} = (T^{-1}1_Y)\hat{}\cdot \hat{g}\circ h$ for
all $\hat{g}\in \hat{A}_b(Y)$. Since $T^{-1}1_Y$ is bounded, we get
in particular that
\begin{equation}\label{eq1}
1_{\cA Y} = \hat{T}(T^{-1}1_Y)\hat{} = (T1_X)\hat{}\cdot (T^{-1}1_Y)\hat{}\circ h^{-1}.
\end{equation}

Let $f\in A(X)$ with $f \geq 0$. Suppose that $x_0 \in \cA X$ and
$y_0 = h(x_0) \in Y$. Note that $\hat{f}(x_0) \geq 0$. If
$\hat{f}(x_0) = 0$, then $Tf(y_0) \geq (T1_X)(y_0)\hat{f}(x_0)$.  Otherwise,
for all $a\in \R$ with $0 \leq a < \hat{f}(x_0)$, there exists an
open neighborhood $U$ of $x_0$ in $\cA X$ such that $f(x)
> a$ for all $x\in U \cap X$. By Lemma \ref{lem6.0}, there
exists $g\in A_b(X)$ such that $\rg \hat{g} \subseteq [0,1]$,
$\hat{g}(x_0) = 1$ and $\hat{g}=0$ outside $U$. Note that $(Tg)(y_0)
= (T1_X)(y_0)\hat{g}(x_0)= (T1_X)(y_0)$ since $g \in A_b(X)$ and
$y_0\in Y$. As $f -ag \geq 0$, $T(f-ag) \geq 0$. Thus $(Tf)(y_0)
\geq a(Tg)(y_0) = a(T1_X)(y_0)$. This shows that  $(Tf)(y_0) \geq
(T1_X)(y_0)\hat{f}(x_0)$. Since $T1_X(y_0) = (T1_X)\hat{}(y_0) > 0$
by Proposition \ref{prop3.5}, we see that, in particular,
$\hat{f}(x_0) \in \R$. In other words, $\hat{f}(h^{-1}(y)) \in \R$
and $(Tf)(y) \geq (T1_X)(y)\hat{f}(h^{-1}(y))$ for all $y\in Y$. By
symmetry and equation (\ref{eq1}), we also get that for all $x\in
X$,
\begin{align*}
(T1_X)\hat{}(h(x))\cdot {f}(x) &= (T1_X)\hat{}(h(x))\cdot (T^{-1}(Tf))(x) \\
&\geq (T1_X)\hat{}(h(x))\cdot (T^{-1}1_Y)(x)\cdot (Tf)\hat{}(h(x))\\
&= (Tf)\hat{}(h(x)).
\end{align*}
Given $y\in Y$, let $(x_\al)$ be a net in $X$ so that $(h(x_\al))$ converges to $y$.  Applying the preceding calculation to $x_\al$ and taking limit gives $T1_X(y)\cdot \hat{f}(h^{-1}(y)) \geq (Tf)(y)$.
Thus  $(Tf)(y)  =   (T1_X)(y)\hat{f}(h^{-1}(y))$ for all $y\in Y$.

For a general $f\in A(X)$, write $f = f_1-f_2$, where $0 \leq f_1, f_2 \in A(X)$. If $y\in Y$, $\hat{f_1}(h^{-1}(y))$ and $\hat{f_2}(h^{-1}(y)) \in \R$.  By the previous paragraph,
\begin{align*} Tf(y) &= Tf_1(y) - Tf_2(y) = T1_X(y)\cdot(\hat{f_1}(h^{-1}(y)) - \hat{f_2}(h^{-1}(y)))\\ &= T1_X(y)\cdot\hat{f}(h^{-1}(y)).
\end{align*}

The formula for $T^{-1}g$ follows by the same argument.
\end{proof}

\noindent{\bf Remark}. If $T:A(X)\to A(Y)$ is a linear order isomorphism so that there exists $0 < c < 1$ so that $c1_Y \leq T1_X\leq c^{-1}1_Y$, then $T(A_b(X)) = A_b(Y)$.  This holds in particular if $T1_X = 1_Y$.
\begin{proof}
In fact, if $f \in A_b(X)$, then there exists $0 < M < \infty$ such that $-M1_X \leq f  \leq M1_X$.  Then
\[ -Mc^{-1}1_Y \leq -MT1_X \leq Tf \leq MT1_X \leq Mc^{-1}1_Y.\]
Since the condition $c1_Y \leq T1_X\leq c^{-1}1_Y$ is equivalent to
$c1_X \leq T^{-1}1_Y \leq c^{-1}1_X$, the other direction follows by
symmetry.
\end{proof}

Theorem \ref{thm6} applies to all function spaces that are commonly
considered in the context of order isomorphisms.  Given a function
space $A(X)$, let $A^{\loc}(X)$ be the space of all real-valued
functions $f$ on $X$ such that for every $x_0 \in X$, there are a
neighborhood $U$ of $x_0$ and a function $g \in A(X)$ such that $f =
g$ on the set $U$. The space $A^{\loc}_b(X)$ is the subspace of all
bounded functions in $A^{\loc}(X)$.

\begin{prop}\label{prop6.1}
\begin{enumerate}
\item Let $X$ be a completely regular space and let $A(X)$ be a sublattice of $C(X)$ that separates points from closed sets and contains the constant functions.  Then $A(X)$ is adequate.
\item If $A(X)$ is adequate, then so are $A_b(X)$, $A^{\loc}(X)$ and $A^{\loc}_b(X)$.
\item Let $X$ be an open set in a Banach space $E$ and let $p \in \N \cup \{\infty\}$. Suppose that $E$ supports a $C^p$ bump function, i.e., there exists $\vp \in C^p(E)$ such that $\vp(0)\neq 0$ and that $\vp$ has bounded support.  Then $C^p(X)$ is adequate.
\end{enumerate}
\end{prop}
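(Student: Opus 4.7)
The plan is to verify the three conditions of Definition~\ref{def:adequate} for each of the three claims. Conditions (a) and (b) are mostly inherited or obtained by explicit construction of the required function $g$; condition (c) is the subtle point, particularly in the unbounded setting.

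For part (a), condition (a) is exactly the hypothesis. For condition (b), take $g(t) = (t \vee 0) \wedge 1$: since $A(X)$ is a sublattice containing the constants, $g \circ f = (f \vee 0_X) \wedge 1_X \in A(X)$ for every $f \in A(X)$. For condition (c), the lattice structure gives $f = f^+ - f^-$ with both summands in $A(X)_+$.

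For part (b), I would handle the three subspaces in parallel. Conditions (a) and (b) are inherited from $A(X)$: a separator from $A(X)$ composed with $g$ is bounded with values in $[0,1]$, handling $A_b(X)$ and $A^{\loc}_b(X)$; the inclusion $A(X) \subseteq A^{\loc}(X)$ supplies separation for $A^{\loc}(X)$; and if $f = h_\alpha \in A(X)$ on $U_\alpha$, then $g \circ f = g \circ h_\alpha \in A(X)$ on $U_\alpha$, so $A^{\loc}(X)$ is $g$-invariant. Condition (c) is immediate for $A_b(X)$ and $A^{\loc}_b(X)$ by the remark after Definition~\ref{def:adequate}, writing $f = c \cdot 1_X - (c \cdot 1_X - f)$ with $c \geq \|f\|_\infty$. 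The one step demanding genuine work is condition (c) for $A^{\loc}(X)$ with possibly unbounded $f$: locally one has $f = h_\alpha = p_\alpha - q_\alpha$ with $p_\alpha, q_\alpha \in A(X)_+$ by condition (c) applied to $A(X)$, but these local decompositions need not agree on overlaps, so direct patching fails. The plan is to construct a global non-negative majorant $f_1 \in A^{\loc}(X)$ with $f_1 \geq f$ (so that $f_2 := f_1 - f \geq 0$ finishes the decomposition) by combining the local majorants $p_\alpha$ through the $g$-cutoffs supplied by condition (b) of $A(X)$. This gluing is the main obstacle of the proposition.

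For part (c), first construct a nondecreasing $g \in C^\infty(\R)$ with $g = 0$ on $(-\infty,0]$ and $g = 1$ on $[1,\infty)$ via the standard recipe $g(t) = \psi(t)/(\psi(t)+\psi(1-t))$ with $\psi(t) = e^{-1/t}$ for $t > 0$ and $\psi(t) = 0$ otherwise; the chain rule then yields condition (b) for $C^p(X)$. For condition (a), given $x_0 \in X$ and a closed $F \subseteq X$ with $x_0 \notin F$, choose $r > 0$ with the open ball $B(x_0, r) \subseteq X \setminus F$, translate and rescale the given $C^p$ bump $\varphi$ to obtain a function in $C^p(E)$ that is nonzero at $x_0$ and vanishes outside $B(x_0, r)$, then restrict to $X$ and normalize. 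For condition (c), let $\phi(t) = \sqrt{1+t^2}$: then $\phi \in C^\infty(\R)$ with $\phi(t) \geq |t|$, so $\phi \circ f \in C^p(X)$ by the chain rule, and $f = \frac{1}{2}(\phi\circ f + f) - \frac{1}{2}(\phi\circ f - f)$ realizes $f$ as a difference of non-negative $C^p$ functions.
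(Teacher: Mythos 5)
Your parts (a) and (c) are correct, and in fact more complete than the paper's own proof: the paper handles (a) by the remark after Definition \ref{def:adequate}, declares (b) ``clear'', and for (c) only exhibits the smooth cutoff $g$, leaving the separation property and the decomposition $f=f_1-f_2$ implicit; your $\sqrt{1+t^2}$ trick is exactly the right way to supply condition (c) of adequacy for unbounded $C^p$ functions. The one genuine gap is the step you yourself flag in part (b): condition (c) for $A^{\loc}(X)$. You correctly observe that the local decompositions $h_\alpha=p_\alpha-q_\alpha$ need not agree on overlaps, but your proposed repair --- combining the local majorants $p_\alpha$ through $g$-cutoffs --- does not go through as stated: gluing with cutoffs means multiplying $p_\alpha$ by a $[0,1]$-valued function, and $A(X)$ is only a vector space with a composition property, not an algebra, so no such product is available.

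The step can be closed without any local majorants, using only $g$ and the constants. For $f\in A^{\loc}(X)$ set
\[ f_1 = 1_X + \sum_{n=0}^{\infty} g\circ(f-n1_X). \]
Since $g(t-n)=1$ whenever $t-n\ge 1$ and $g\ge 0$, one checks that $1+\sum_{n\ge 0}g(t-n)\ge \max(t,1)$ for every $t\in\R$; hence $f_1\ge 0$ and $f_1\ge f$. Moreover $f_1\in A^{\loc}(X)$: given $x_0$, choose a neighborhood $V$ on which $f=h_\alpha\in A(X)$ and $f\le M$; on $V$ every term with $n>M$ vanishes, so $f_1$ agrees on $V$ with the finite sum $1_X+\sum_{0\le n\le M} g\circ(h_\alpha-n1_X)$, which lies in $A(X)$ by $g$-invariance and the presence of constants. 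Then $f_2=f_1-f\ge 0$ also lies in $A^{\loc}(X)$ and $f=f_1-f_2$ as required. With this insertion (and an unchanged argument for $A^{\loc}_b(X)$, where boundedness plus constants already give condition (c)), your proof is complete.
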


\begin{proof}
For part (a), see the remark following Definition
\ref{def:adequate}. Part (b) is clear.  For part (c), take $g:\R\to
\R$ to be a nondecreasing $C^\infty$ function such that $g(t) = 0$
if $t \leq 0$ and $g(t) = 1$ if $t \geq 1$. Then $C^p(X)$ and
$C^p_b(X)$ are $g$-invariant.
\end{proof}

Let $(X,d)$ be a metric space.  The space of Lipschitz functions on
$X$, $\Lip(X)$, consists of all real-valued functions $f$ on $X$
such that
\[\sup\bigl\{\frac{|f(x)-f(y)|}{d(x,y)}: x, y \in X, x\neq y\bigr\} < \infty.\]
The space $\lip(X)$ of little Lipschitz functions consists of all
$f\in \Lip(X)$ such that
\[ \lim_{t\to 0}\sup\{\frac{|f(x)-f(y)|}{t}: 0 < d(x,y) < t\} = 0.\]
The space of uniformly continuous functions on $X$ is denoted by
$UC(X)$. $\Lip(X)$, $\lip(X)$ and $UC(X)$ are sublattices of $C(X)$
that contain the constant functions. $\Lip(X)$ and $UC(X)$ always
separate points from closed sets and hence are adequate by
Proposition \ref{prop6.1}. If $\lip(X)$ separates points from closed
sets, then it is also adequate.  This occurs in particular if there
exists $0 < \al <1$ and a metric $D$ on $X$ so that $d = D^\al$. 
By Proposition \ref{prop6.1}(c), the
local, bounded, and bounded local versions of these spaces are also
adequate.

An obvious question with regard to Theorem \ref{thm6} is under what
circumstances  would the homeomorphism $h$ map $X$ onto $Y$.  A
simple example shows that this may not always be the case.  In fact,
for any completely regular space $X$, $C_b(X)$ is linearly order
isomorphic to $C(\beta X)$.  Thus, if $X$ and $Y$ are
non-homeomorphic spaces with homeomorphic Stone-\v{C}ech
compactifications, then $C_b(X)$ is linearly order isomorphic to
$C_b(Y)$ under a linear isomorphism that preserves bounded
functions.

\section{Refinements}\label{sec3}

The purpose of this section is to refine Theorem \ref{thm6} by showing that in many situations, the homeomorphism $h$ maps $X$ onto $Y$.  We will also show that in some cases, one may remove the condition that $T$ preserves bounded functions. First we look at a classical situation.
A completely regular space $X$ is said to be {\em realcompact} if given $x \in \beta X \bs X$, there exists $f \in C(X)$ whose extension $\hat{f}$ takes infinite value at $x$.

\begin{thm}\label{thm7}
Let $X$ and $Y$ be realcompact spaces and let $T: C(X) \to C(Y)$ be a linear order isomorphism.  Then there exists a homeomorphism $\theta: X\to Y$ such that $Tf = T1_X\cdot f\circ \theta^{-1}$ and $T^{-1}g = T^{-1}1_Y\cdot g\circ \theta$ for all $f\in C(X)$ and all $g\in C(Y)$.
\end{thm}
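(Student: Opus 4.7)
The plan is to apply Theorem \ref{thm6} to a suitably normalized version of $T$ and then use realcompactness to restrict the resulting homeomorphism on the Stone-\v Cech compactifications back to $X$ and $Y$. The crucial preliminary step is to show that $T1_X > 0$ everywhere on $Y$ (and symmetrically $T^{-1}1_Y > 0$ on $X$). The argument of Proposition \ref{prop3.5} breaks down because $1_X$ is no longer an order unit in $C(X)$ when $X$ is noncompact, so the dominating inequality $|f| \leq c\,1_X$ can fail. Instead, I would use that a positive linear bijection with positive inverse between Riesz spaces is automatically a Riesz isomorphism, so $T$ preserves $\vee$, $\wedge$, and $|\cdot|$. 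Then for each $y_0\in Y$ the composition $\phi := \mathrm{ev}_{y_0}\circ T : C(X) \to \R$ is a Riesz homomorphism, nonzero since $\phi(T^{-1}1_Y) = 1$. The classical identification of nonzero Riesz homomorphisms on $C(X)$ for realcompact $X$ with positive scalar multiples of point evaluations gives $T1_X(y_0) = \phi(1_X) > 0$.

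With positivity in hand, set $\tilde T f := Tf/T1_X$, so $\tilde T$ is a linear order isomorphism $C(X)\to C(Y)$ with $\tilde T 1_X = 1_Y$. The remark following Theorem \ref{thm6} then yields $\tilde T(C_b(X)) = C_b(Y)$, so Theorem \ref{thm6} applies with $A(X) = C(X)$ and $A(Y) = C(Y)$ (whose compactifications $\cA X, \cA Y$ in this case are exactly $\beta X$ and $\beta Y$). It produces a homeomorphism $h: \beta X \to \beta Y$ with $\tilde T f(y) = \hat f(h^{-1}(y))$ for all $y\in Y$ and $f \in C(X)$.

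To show that $h$ restricts to a homeomorphism between $X$ and $Y$, note that for each $y\in Y$ the value $\hat f(h^{-1}(y)) = \tilde T f(y)$ is real for every $f\in C(X)$; the definition of realcompactness used in the paper then forces $h^{-1}(y) \in X$, giving $Y \subseteq h(X)$. The symmetric argument applied to $\tilde T^{-1}$, using realcompactness of $Y$, gives $h(X) \subseteq Y$. Setting $\theta := h|_X$, this is a homeomorphism from $X$ onto $Y$, and the formula $Tf = T1_X\cdot f\circ \theta^{-1}$ is immediate from $\tilde T f(y) = f(\theta^{-1}(y))$. The corresponding formula for $T^{-1}$ follows either from the symmetric argument or from the identity $T^{-1}1_Y(x)\cdot T1_X(\theta(x)) = 1$, which is a direct consequence of $T(T^{-1}1_Y) = 1_Y$ and the formula for $T$.

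The main obstacle is the very first step, the positivity of $T1_X$. Everything downstream is a clean application of Section \ref{sec2} together with the paper's own definition of realcompactness. The positivity argument, however, is where realcompactness of $X$ enters essentially: without it, a nonzero Riesz homomorphism on $C(X)$ could correspond to a ``point at infinity'' in $\beta X\setminus X$ on which $1_X$ is annihilated while unbounded functions are not; realcompactness rules out exactly this pathology.
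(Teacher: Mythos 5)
Your argument is correct, but it takes a genuinely different route from the paper's at the one step that matters, and the comparison is worth making. The paper never establishes $T1_X>0$ in advance. Instead it sets $u=1_X+T^{-1}1_Y$, so that $u\geq 1_X$ and $Tu=T1_X+1_Y\geq 1_Y$ are immediate from positivity of $T$ and $T^{-1}$, and normalizes by $Tu$ rather than by $T1_X$: the map $Sf=T(uf)/Tu$ is visibly a well-defined unital linear order isomorphism, Theorem \ref{thm6} applies to it, and the strict positivity of $T1_X$ simply falls out of the final representation ($T1_X=Tu\cdot(1/u)\circ\theta^{-1}$). This ``division trick'' is elementary, self-contained, and is reused in Theorem \ref{thm17}. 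Your route instead proves $T1_X>0$ directly by upgrading $T$ to a Riesz isomorphism (correct: a positive bijection with positive inverse preserves finite suprema) and appealing to the representation of nonzero real-valued Riesz homomorphisms on $C(X)$. This works, with two caveats. First, the ``classical identification'' you invoke has a nontrivial component that needs justification: one must exclude a nonzero Riesz homomorphism $\phi$ with $\phi(1_X)=0$. This is true --- such a $\phi$ kills $C_b(X)$, and for $f\geq 0$ with $\phi(f)=1$ one has $f^2-\phi(f^2)f\in\ker\phi$, hence $\bigl(f-(\phi(f^2)+1)1_X\bigr)^+\in\ker\phi$ by solidity of the kernel, forcing $\phi(f)\leq 0$ --- but it uses the multiplicative structure of $C(X)$ and should be argued or precisely cited. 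Second, your closing diagnosis misplaces the role of realcompactness: the impossibility of $\phi(1_X)=0$ holds for \emph{every} completely regular $X$; realcompactness enters only in locating the supporting point of a unital Riesz homomorphism inside $X$ rather than at a point of $\beta X\setminus X$ where all extensions $\hat f$ are finite, and, in the paper's version, only in pulling the homeomorphism $h:\beta X\to\beta Y$ back to a homeomorphism of $X$ onto $Y$. Everything downstream of your first step --- the normalization, the appeal to Theorem \ref{thm6} and the remark following it, and the use of finiteness of $\hat f\circ h^{-1}$ on $Y$ together with realcompactness to get $h^{-1}(Y)\subseteq X$ and symmetrically $h(X)\subseteq Y$ --- coincides with the paper's argument and is correct.
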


\begin{proof}
Let $u = 1_X+ T^{-1}1_Y$.  Observe that $u \geq 1_X$ and $Tu \geq 1_Y$. Define $S: C(X) \to C(Y)$ by $Sf = T(uf)/Tu$ for all $f\in C(X)$.  It is easy to check that $S$ is a linear order isomorphism such that $S1_X = 1_Y$.   Let  $h:\beta X \to \beta Y$ be the homeomorphism obtained by applying Theorem \ref{thm6} to the map $S$.
Since $\hat{f}(h^{-1}(y))\R$ for all $f\in C(X)$ and $y \in Y$, and $X$ is realcompact, we find that $h^{-1}(y) \in X$ for all $y\in Y$.  Similarly, $h(x) \in Y$ for all $x\in X$.  Let $\theta$ be the restriction of $h$ to $X$.  Then $\theta$ is a homeomorphism from $X$ onto $Y$.
By Theorem \ref{thm6} again,
and $Sf = f\circ \theta^{-1}$for all $f\in C(X)$.
In particular, $(1/u)\circ \theta^{-1} = S(\frac{1}{u}) = \frac{T1_X}{Tu}$.
Hence, for all $f\in C(X)$,
\begin{align*}
Tf &= Tu \cdot S(\frac{f}{u}) = Tu \cdot \frac{f}{u}\circ \theta^{-1} = Tu \cdot(f\circ \theta^{-1})\cdot (\frac{1}{u}\circ \theta^{-1})\\& = Tu \cdot(f\circ \theta^{-1})\cdot \frac{T1_X}{Tu} = T1_X\cdot (f\circ \theta^{-1}).
\end{align*}
Similarly, $T^{-1}g = T^{-1}1_Y \cdot g\circ \theta$ for all $g\in C(Y)$.
\end{proof}

Representation of nonlinear order isomorphisms between spaces of
continuous functions on compact Hausdorff spaces has been obtained
by F.\ Cabello S\'{a}nchez \cite{CS1}.

For the remainder of this section, we consider metric spaces $X$ and $Y$.

\begin{prop}\label{prop8.0}
Let $X$ and $Y$ be metric spaces and let $A(X)$ and $A(Y)$ be adequate subspaces of $C(X)$ and $C(Y)$ respectively.  If $h:\cA X \to \cA Y$ is a homeomorphism and $x_0\in X$, then there is sequence $(y_n)$ in $Y$ that converges to $h(x_0)$ in $\cA Y$.
\end{prop}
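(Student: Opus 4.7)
The plan is to reduce the problem to establishing first-countability of $\cA X$ at the given point $x_0 \in X$, and then to transfer this property to $\cA Y$ at $h(x_0)$ via the homeomorphism $h$, extracting the desired sequence from the dense subset $Y$ of $\cA Y$. Since $X$ is metric, the balls $B_n := \{x \in X : d(x,x_0) < 1/n\}$ form a countable descending neighborhood basis of $x_0$ in $X$. For each $n$, $X \setminus B_n$ is closed in $X$ and does not contain $x_0$; because $A(X)$ is adequate it precisely separates points from closed sets in $X$ (compose the separating function with the function $g$ of Definition \ref{def:adequate}), yielding $f_n \in A_b(X)$ with $0 \le f_n \le 1$, $f_n(x_0) = 1$, and $f_n \equiv 0$ on $X \setminus B_n$. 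Set $V_n := \{z \in \cA X : \hat{f}_n(z) > 1/2\}$, an open neighborhood of $x_0$ in $\cA X$.

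The crux of the argument, and the step I expect to be the main obstacle, is verifying that $(V_n)$ is a neighborhood basis of $x_0$ in $\cA X$. Given an arbitrary open set $V \ni x_0$ in $\cA X$, Lemma \ref{lem6.0} supplies $F \in A_b(X)$ with $0 \le \hat{F} \le 1$, $\hat{F}(x_0) = 1$, and $\hat{F} \equiv 0$ on $\cA X \setminus V$. Continuity of $F$ on $X$ at $x_0$ combined with the metric-based first-countability produces an index $N$ with $F > 1/2$ on $B_N$. The key observation is that $2F - f_N \ge 0$ on all of $X$: on $B_N$ because $2F > 1 \ge f_N$, and on $X \setminus B_N$ because $f_N \equiv 0$ there. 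By continuity of the extension and density of $X$ in $\cA X$, this inequality lifts to $2\hat{F} \ge \hat{f}_N$ on $\cA X$. Consequently, if $z \in V_N$ then $\hat{F}(z) > 1/4 > 0$, which forces $z \in V$; hence $V_N \subseteq V$.

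Finally, replace $V_n$ by the decreasing sequence $W_n := V_1 \cap \cdots \cap V_n$, which is still a neighborhood basis of $x_0$ in $\cA X$. Since $h$ is a homeomorphism, $(h(W_n))$ is a decreasing neighborhood basis of $h(x_0)$ in $\cA Y$. Density of $Y$ in $\cA Y$ together with openness of each $h(W_n)$ allows a choice of $y_n \in Y \cap h(W_n)$. Any open neighborhood of $h(x_0)$ in $\cA Y$ contains some $h(W_k)$ and therefore contains $y_n$ for all $n \ge k$, so $(y_n)$ is a sequence in $Y$ converging to $h(x_0)$ in $\cA Y$, as required.
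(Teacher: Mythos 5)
Your argument is correct, and it takes a genuinely different route from the paper's. You prove the stronger structural fact that $\cA X$ is first countable at each point $x_0$ of $X$: the sets $V_n=\{z\in\cA X:\hat f_n(z)>1/2\}$ built from bump functions supported in the metric balls $B(x_0,1/n)$ form a neighborhood basis, the key step being the pointwise inequality $2F\geq f_N$ on $X$ which lifts to $\cA X$ by density and forces $V_N\subseteq V$. Once first countability at $x_0$ is transferred through the homeomorphism $h$, the desired sequence is extracted from the dense set $Y$ by pure topology; note also that your argument needs no case distinction for isolated points. The paper instead splits into cases: for isolated $x_0$ it shows $x_0$ (hence $h(x_0)$) is isolated in the compactification, so $h(x_0)\in Y$ outright; for non-isolated $x_0$ it places bump functions at auxiliary points $x_n\to x_0$ (not at $x_0$ itself), picks $y_n\in h(U_n)\cap Y$, and verifies directly that $z_n=h^{-1}(y_n)$ satisfies $z_n(f)\to f(x_0)$ for every $f\in A(X)$ because $z_n$ lies in the closure of $U_n\cap X\subseteq B(x_n,\ep_n)$. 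Your approach buys a cleaner, reusable statement (first countability of $\cA X$ at points of $X$ for metric $X$ and adequate $A(X)$) and a purely formal conclusion; the paper's approach avoids having to verify that the $V_n$ form a basis, at the cost of a case analysis and a more hands-on convergence check. Both rest on the same separation machinery, namely $g$-invariance and Lemma \ref{lem6.0}.
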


\begin{proof}
In the first instance, suppose that $x_0$ is an isolated point in $X$.  Since $A(X)$ separates points from closed sets, the characteristic function $\chi_{\{x_0\}} \in A(X)$.  Let $U = \{x\in \cA X: x(\chi_{\{x_0\}}) > 0\}$.  Then $U$ is an open neighborhood of $x_0$ in $\cA X$.    We claim that $U$ contains only $x_0$.  To this end, suppose that $x\in \cA X \bs\{x_0\}$.  Choose a net $(x_\al)$ in $X$ that converges to $x$.  Since $\chi_{\{x_0\}}(x_\al) \in \{0,1\}$ and $\lim_\al \chi_{\{x_0\}}(x_\al) = x(\chi_{\{x_0\}})$, the latter value is either $0$ or $1$.  If it is $1$, then there exists $\al_0$ such that $ \chi_{\{x_0\}}(x_\al) =1$ for all $\al \geq \al_0$.  Then $x_\al = x_0$ for all $\al \geq \al_0$.  As a result, $x= x_0$, yielding a contradiction.  Thus $x(\chi_{\{x_0\}}) = 0$, i.e., $x\notin U$, as claimed.
By the claim, $x_0$ is an isolated point in $\cA X$.  Hence $h(x_0)$ is an isolated point in $\cA Y$.  But since $Y$ is dense in $\cA Y$, $h(x_0)$ cannot be in $\cA Y \bs Y$.  So $h(x_0) \in Y$ and the conclusion of the proposition is obvious.

Now suppose that $x_0$ is not an isolated point in $X$.  Fix a pairwise distinct sequence of points $(x_n)$ in $X$ that converges to $x_0$ and a strictly positive null sequence $(\ep_n)$ in $\R$. Choose $f_n \in A(X)$ such that $f_n(x_n) = 1$ and $f_n = 0$ outside $B(x_n,\ep_n)$.  Set $U_n = \{x\in \cA X: x(f_n)> 0\}$.  Then $U_n$ is an open neighborhood of $x_n$ in $\cA X$.  Thus $h(U_n)$ is a nonempty open set in $\cA Y$ and hence $h(U_n) \cap Y \neq \emptyset$.
Pick $y_n \in h(U_n)\cap Y$ and let $z_n = h^{-1}(y_n)$.
Take any $f\in A(X)$.  In particular, $f$ is continuous on $X$.  For any $\ep > 0$, there exists $\delta > 0$ so that $|f(x) - f(x_0)| < \ep$ if $x\in B(x_0,\delta)$.
Observe that $U_n$ is open in $\cA X$ and hence $U_n \subseteq \ol{U_n\cap X}$.  Then $z_n \in U_n \subseteq \ol{U_n\cap X}$.  By continuity of $\hat{f}$, $z_n(f) = \hat{f}(z_n) \in \ol{f(U_n\cap X)}$.
There exists $n_0$ such that $B(x_n,\ep_n) \subseteq B(x_0,\delta)$ for all $n\geq n_0$.
Hence $U_n\cap X  \subseteq B(x_0,\delta)$ for all $n\geq n_0$.
Therefore, $|z_n(f)-f(x_0)| \leq  \ep$ for all $n \geq n_0$.
This proves that $\lim z_n(f) = f(x_0)$.
As $f\in A(X)$ is arbitrary, $z_n \to x_0$ in $\cA X$.
Thus $y_n = h(z_n) \to h(x_0)$ in $\cA Y$, as desired.
\end{proof}

A set of points $S$ in a metric space is {\em separated} if there exists $\ep > 0$ such that $d(x_1,x_2) \geq \ep$ whenever $x_1$ and $x_2$ are distinct points in $S$.

\begin{cor}\label{cor9}
Let $X$  and $Y$ be metric spaces and let $A(X)$ and $A(Y)$ be
adequate subspaces of $C(X)$ and $C(Y)$ respectively.  Assume that
\begin{enumerate}
\item $A(Y) = A^{\loc}(Y)$ or $A^{loc}_b(Y)$, or
\item $Y$ is complete and that for any separated sequence $(y_n)$ in $Y$, there exists $g\in A(Y)$ such that $g(y_{2n}) = 1$ and $g(y_{2n-1}) = 0$ for all $n$.
\end{enumerate}
If $h:\cA X \to \cA Y$ is a homeomorphism and $x_0\in X$, then $h(x_0) \in Y$.
\end{cor}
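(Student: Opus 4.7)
The plan is to derive a contradiction from the assumption $h(x_0)\notin Y$. By Proposition \ref{prop8.0}, there is a sequence $(y_n)$ in $Y$ with $y_n\to h(x_0)$ in $\cA Y$. Because the inclusion $Y\hookrightarrow\cA Y$ is a homeomorphic embedding into the Hausdorff space $\cA Y$, no subsequence of $(y_n)$ can converge in $Y$: any such limit would equal $h(x_0)\in Y$, against the assumption. Passing to a subsequence, we may assume the $y_n$ are pairwise distinct. In both cases the objective is to produce some $f\in A(Y)$ satisfying $f(y_{2n}) = 1$ and $f(y_{2n-1}) = 0$ for all $n$; since $\hat f$ is continuous as a map $\cA Y\to[-\infty,\infty]$ and $y_n\to h(x_0)$, this forces $(\hat f(y_n))$ to converge in $[-\infty,\infty]$, contradicting the alternation between $0$ and $1$.

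For case (a), the set $E = \{y_n\}$ has no accumulation point in $Y$, so it is closed in $Y$ and each $y_n$ is isolated in $E$. Choose radii $r_n > 0$ with $r_n\leq \tfrac12 d(y_n, E\setminus\{y_n\})$ and $r_n\to 0$; then the balls $U_n = B(y_n, r_n)$ are pairwise disjoint and form a locally finite family in $Y$, by the standard swelling argument for closed discrete sets in a metric space. Using that the adequate space $A(Y)$ precisely separates points from closed sets, pick $\vp_n\in A(Y)$ with $0\leq \vp_n \leq 1$, $\vp_n(y_{2n}) = 1$, and $\vp_n = 0$ off $U_{2n}$, and set $f = \sum_n\vp_n$. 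Local finiteness of $(U_n)$ ensures that on a neighborhood of each $y\in Y$ the sum reduces to a finite sum of elements of $A(Y)$, so $f\in A^{\loc}(Y)$. Since the $U_{2n}$ are pairwise disjoint, $f$ takes values in $[0,1]$, so also $f\in A^{\loc}_b(Y)$. Under either hypothesis in (a), $f\in A(Y)$, and $f(y_{2n}) = 1$, $f(y_{2n-1}) = 0$, as required.

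For case (b), completeness of $Y$ rules out a Cauchy subsequence of $(y_n)$, for any such subsequence would converge in $Y$. Hence $\{y_n\}$ is not totally bounded in $Y$, so there is $\ep>0$ for which $\{y_n\}$ admits no finite $\ep$-net, and a standard greedy extraction produces an $\ep$-separated subsequence of $(y_n)$. The hypothesis in (b) applied to this subsequence supplies the required $f$. The main obstacle lies in case (a): one must arrange the neighborhoods $U_n$ to be simultaneously pairwise disjoint and locally finite so that the infinite sum $\sum_n\vp_n$ qualifies as an element of $A^{\loc}(Y)$. This is the sole place where the hypothesis $A(Y) = A^{\loc}(Y)$ (or $A^{\loc}_b(Y)$) is invoked, and the construction rests essentially on the paracompactness of metric spaces together with the precise separation of points from closed sets afforded by the adequacy of $A(Y)$.
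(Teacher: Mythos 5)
Your proposal is correct and follows essentially the same route as the paper's proof: extract from Proposition \ref{prop8.0} a sequence converging to $h(x_0)$ with no convergent (resp.\ Cauchy) subsequence, surround its points by pairwise disjoint, locally finite balls, sum bump functions (obtained via Lemma \ref{lem6.0}) supported in the even-indexed balls to get an element of $A^{\loc}_b(Y)\subseteq A(Y)$ alternating between $0$ and $1$ on the sequence, and contradict the convergence of $(\hat{g}(y_n))$ to $\hat{g}(h(x_0))$; case (b) is handled by passing to a separated subsequence exactly as in the paper. The only differences are presentational (a global proof by contradiction rather than the paper's ``if a subsequence converges we are done'' framing, and a slightly different but equivalent choice of radii guaranteeing disjointness and local finiteness).
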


\begin{proof}
(a) By Proposition \ref{prop8.0}, there is a sequence $(y_n)$ in $Y$
that converges to $h(x_0)$. If $(y_n)$ has a subsequence that
converges in $Y$, then we are done. Assume that $(y_n)$ has no
subsequence that converges in $Y$. We may then assume that there is
a strictly positive real sequence $(\ep_n)$ so that $B(y_m,2\ep_m)
\cap B(y_n,2\ep_n) = \emptyset$ if $m\neq n$. By Lemma \ref{lem6.0},
there exists $g_n \in A(Y)$ so that $g_n(y_n )=1$, $g_n = 0$ outside
$B(y_n,\ep_n)$, and $0 \leq g_n \leq 1_Y$. Let $g$ be the pointwise
sum $\sum g_{2n}$. Take $y\in Y$.  If $y \in B(y_n,2\ep_n)$ for some
$n$, then there exists $\ep > 0$ such that $B(y,\ep)$ does not
intersect $B(y_m,\ep_m)$ for any $m \neq n$. Suppose $y \notin
B(y_n,2\ep_n)$ for any $n$.  If, for all $\ep > 0$, $B(y,\ep)$
intersects $B(y_n,\ep_n)$ for at least two $n$, then for any $\ep >
0$, $B(y,\ep)$ intersects infinitely many $B(y_n,\ep_n)$.  This
implies that $(y_n)$ has a convergent subsequence, contrary to the
choice of $(y_n)$.  This establishes that
 there exists $\ep
> 0$ so that $B(y,\ep)$ intersects at most one $B(y_n,\ep_n)$.
Thus $g$ is a bounded function in $A^{\loc}(Y)$. By the assumption,
$g\in A(Y)$.  In particular, $(g(y_n))$ converges to
$\hat{g}(h(x_0))$.  However, this is impossible since $g(y_n) = 1$
if $n$ is even and $0$ if $n$ is odd.

\noindent (b) By Proposition \ref{prop8.0}, there is a sequence
$(y_n)$ in $Y$ that converges to $h(x_0)$. If $(y_n)$ has a Cauchy
subsequence, then we are done. Otherwise, we may assume that $(y_n)$
is separated. By the assumption, there exists $g\in A(Y)$ so that
$g(y_{2n}) = 1$ and $g(y_{2n-1}) = 0$ for all $n$. But this is
impossible since  $(g(y_n))$ converges to $\hat{g}(h(x_0))$.
\end{proof}

The next theorem unifies many results concerning unital order isomorphisms on most types of commonly considered function spaces defined on metric spaces.

\begin{thm}\label{thm16.0}
Let $X$ and $Y$ be metric spaces.  Assume that $A(X)$ is an adequate subspace of $C(X)$ and that either
\begin{enumerate}
\item $A(X) = A^{\loc}(X)$ or $A^{\loc}_b(X)$, or
\item $X$ is complete and  for any separated sequence $(x_n)$ in $X$, there exists $f\in A(X)$ such that $f(x_{2n}) = 1$ and $f(x_{2n-1}) = 0$ for all $n$.
\end{enumerate}
Assume the same for $A(Y)$.
If $T:A(X)\to A(Y)$ is a linear order isomorphism such that $T(A_b(X)) = A_b(Y)$, then
there is a homeomorphism $h:X \to Y$ such that $Tf = T1_X\cdot f\circ h^{-1}$ and $T^{-1}g = T1_Y\cdot g\circ h$ for all $ f\in A(X)$ and $g\in A(Y)$.
\end{thm}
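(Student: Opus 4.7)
The plan is to combine Theorem \ref{thm6} with Corollary \ref{cor9} applied in both directions. Since $T: A(X) \to A(Y)$ is a linear order isomorphism with $T(A_b(X))=A_b(Y)$ and $A(X), A(Y)$ are adequate, Theorem \ref{thm6} applies directly (metric spaces are completely regular), yielding a homeomorphism $h: \cA X \to \cA Y$ together with the representation
\[ Tf = T1_X\cdot \hat{f}\circ h^{-1}_{|Y}, \qquad T^{-1}g = T^{-1}1_Y\cdot \hat{g}\circ h_{|X},\]
valid for all $f\in A(X)$ and $g\in A(Y)$. The remaining task is therefore entirely a matter of showing that the restriction of $h$ maps $X$ bijectively onto $Y$; once this is done, the representation formulas collapse to the desired form because $\hat{f}(h^{-1}(y)) = f(h^{-1}(y))$ whenever $h^{-1}(y)\in X$.

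First I would fix $x_0 \in X$ and observe that the assumption on $A(Y)$ is exactly hypothesis (a) or (b) of Corollary \ref{cor9}. Applying that corollary to the homeomorphism $h$ gives $h(x_0) \in Y$. This shows $h(X)\subseteq Y$. Next, the roles of $X$ and $Y$ are symmetric: $h^{-1}: \cA Y \to \cA X$ is itself a homeomorphism of the compactifications, and the hypothesis on $A(X)$ matches the hypothesis of Corollary \ref{cor9} with the roles of the two spaces swapped. Applying the corollary in this form to any $y_0 \in Y$ yields $h^{-1}(y_0) \in X$, so $h(X) = Y$. Because $h$ is a homeomorphism between compact Hausdorff spaces and both $h$ and $h^{-1}$ carry the distinguished dense subspace into the corresponding one, the restriction $h_{|X}: X\to Y$ is a homeomorphism.

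Finally, for any $f\in A(X)$ and $y\in Y$, the equality $Tf(y) = T1_X(y)\cdot \hat{f}(h^{-1}(y))$ from Theorem \ref{thm6} becomes $Tf(y) = T1_X(y)\cdot f(h^{-1}(y))$ because $h^{-1}(y) \in X$. The analogous computation gives $T^{-1}g = T^{-1}1_Y\cdot g\circ h$ on $X$. All of the heavy lifting has already been done: the compactification-level representation is supplied by Theorem \ref{thm6}, and Corollary \ref{cor9} is precisely tailored to pull that representation back to $X$ and $Y$. The only delicate point is recognizing that Corollary \ref{cor9} must be invoked symmetrically, once with the hypothesis on $A(Y)$ to control $h(X)$ and once with the hypothesis on $A(X)$ to control $h^{-1}(Y)$; this is immediate from the symmetric roles of $X$ and $Y$ in the statement of the corollary and requires no additional work.
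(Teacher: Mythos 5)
Your proposal is correct and follows exactly the route the paper takes: Theorem \ref{thm6} supplies the homeomorphism $h:\cA X\to\cA Y$ and the representation at the level of the compactifications, and Corollary \ref{cor9} is applied twice — once with the hypothesis on $A(Y)$ to get $h(X)\subseteq Y$ and once symmetrically with the hypothesis on $A(X)$ to get $h^{-1}(Y)\subseteq X$ — after which the formulas restrict to $X$ and $Y$. The paper's own proof is just the one-line instruction to combine these two results, so your write-up is simply a fuller version of the same argument.
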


\begin{proof}
Apply Theorem \ref{thm6} and Corollary \ref{cor9} (a) or (b) as the case may be.
\end{proof}

Theorem \ref{thm16.0} applies if $A(X)$ (and $A(Y)$) is of one of the following types:
\begin{enumerate}
\item $\Lip(X), \Lip_b(X)$ $UC(X)$, $UC_b(X)$, where $X$ is complete metric;
\item $\lip(X), \lip_b(X)$, where $X$ is complete metric and $\lip(X)$ satisfies condition (b) in Theorem \ref{thm16.0};
\item $\Lip^{\loc}(X)$, $\Lip^{\loc}_b(X)$, $UC^{\loc}(X)$, $UC^{\loc}_b(X)$, where $X$ is metric (not necessarily complete);
\item $\lip^{\loc}(X)$, $\lip^{\loc}_b(X)$, where $X$ is metric, not necessarily complete, and $\lip^{\loc}(X)$ separates points from closed sets;
\item $C^p(X)$, $C^p_b(X)$, where $X$ is an open set in a Banach space $E$ that supports a $C^p$ bump function.
\end{enumerate}

Finally, we show that the condition that $T$ preserves bounded functions may be removed in certain cases.  The idea is to use the ``division trick" that has been employed in the proof of Theorem \ref{thm7}.

\begin{thm}\label{thm17}
Let $X$ and $Y$ be metric spaces.
Assume that
\begin{enumerate}
\item $A(X)$ is a sublattice of $C(X)$ that separates points from closed sets and contains constants, and that either
\begin{enumerate}
\item $A(X) = A^{\loc}(X)$ or $A^{\loc}_b(X)$, or
\item $X$ is complete and $A(X) = \Lip(X)$ or $\Lip_b(X)$; or
\item $A(X) = \lip(X)$ or $\lip_b(X)$, where $X$ is complete with metric $d$ such that $d = D^\al$ for some metric $D$ on $X$ and $0 < \al < 1$; or
\end{enumerate}
\item $A(X) = C^p(X)$ or $C^p_b(X)$, where $X$ is an open set in a Banach space $E$ that supports a $C^p$ bump function.
\end{enumerate}
Assume the same for $A(Y)$.
If $T:A(X)\to A(Y)$ is a linear order isomorphism, then
there is a homeomorphism $h:X \to Y$ such that $Tf = T1_X\cdot f\circ h^{-1}$ and $T^{-1}g = T1_Y\cdot g\circ h$ for all $ f\in A(X)$ and $g\in A(Y)$.
\end{thm}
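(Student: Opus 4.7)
The plan is to mimic the ``division trick'' of Theorem~\ref{thm7} in order to reduce Theorem~\ref{thm17} to Theorem~\ref{thm16.0}. I would set $u = 1_X + T^{-1}1_Y \in A(X)$ and $v = Tu = T1_X + 1_Y \in A(Y)$, so that $u \geq 1_X$ and $v \geq 1_Y$, and define
\[
 Sf = \frac{T(uf)}{v}, \qquad S^{-1}g = \frac{T^{-1}(vg)}{u}.
\]
A direct check gives $S1_X = Tu/v = 1_Y$, and $S$ is a linear order isomorphism once the formulas are verified to be well-defined (i.e.\ once the required multiplications and divisions are known to stay within $A(X)$ and $A(Y)$). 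Granting that, the remark after Theorem~\ref{thm6} ensures $S(A_b(X)) = A_b(Y)$, and Theorem~\ref{thm16.0} applies to $S$, producing a homeomorphism $h: X \to Y$ with $Sf = f \circ h^{-1}$. Unwinding gives $T(uf) = v \cdot f \circ h^{-1}$, and writing $F = u \cdot (F/u)$ for a general $F \in A(X)$ yields $TF = v \cdot (F/u) \circ h^{-1}$; substituting $F = 1_X$ identifies $T1_X = v \cdot (1/u) \circ h^{-1}$, so combining gives $TF = T1_X \cdot F \circ h^{-1}$. The formula for $T^{-1}$ follows symmetrically.

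The technical core of the argument is verifying closure of $A(X)$ under multiplication and division by $u$, and of $A(Y)$ under the corresponding operations with $v$, in each listed case. This is straightforward for case (ii): $C^p(X)$ is an algebra and $u, v$ are positive $C^p$ functions bounded below by $1$, so $1/u \in C^p(X)$ and $1/v \in C^p(Y)$, and all products and quotients stay in the respective $C^p$ spaces. For case (i)(a), where $A(X) = A^{\loc}(X)$ or $A^{\loc}_b(X)$, the verification is local: at any $x_0$ both $u$ and $f \in A^{\loc}(X)$ agree on some neighborhood with elements of the underlying sublattice (where they are in particular locally bounded), so products and quotients admit local sublattice representatives, placing the result in $A^{\loc}(X)$.

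The main obstacle arises in cases (i)(b) and (i)(c), where $A(X) = \Lip(X)$ or $\lip(X)$ with $X$ complete: the function $u = 1_X + T^{-1}1_Y$ may fail to be bounded, and a product of two unbounded Lipschitz (resp.\ little-Lipschitz) functions need not be Lipschitz (resp.\ little-Lipschitz), so the naive trick breaks multiplication closure. To handle this I would replace $u$ by a bounded surrogate $\tilde u \in A_b(X)$ constructed via the sublattice operations (involving $T1_X \wedge 1_Y$ and $T^{-1}$), coordinated so that $T\tilde u$ is simultaneously bounded above and bounded below by a positive constant; then the reciprocal of $T\tilde u$ lies in $A_b(Y)$, and multiplication/division by $\tilde u$ and $T\tilde u$ respects $\Lip$ or $\lip$. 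Orchestrating the truncation so that both $\tilde u$ and $T\tilde u$ satisfy these requirements, while keeping the modified $S$ an order isomorphism of the full unbounded spaces $A(X)$ and $A(Y)$, is the most delicate part of the argument and is where one exploits the interplay between the sublattice closure of $A$ and the order-preservation of $T$.
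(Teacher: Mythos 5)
Your overall strategy (the division trick reducing to a unital order isomorphism) is the right one, but the way you set it up creates an obstruction that the paper deliberately avoids, and your proposed fix does not work. You insist that $Sf = T(uf)/Tu$ be an operator from $A(X)$ to $A(Y)$ itself, which forces you to prove that $A(X)$ is closed under multiplication and division by $u$. This is fine for $C^p$ (an algebra), but it fails elsewhere. In case (i)(a) your justification is that products ``admit local sublattice representatives''; a sublattice is not closed under multiplication, so the product of two local representatives from $A(X)$ need not itself agree locally with an element of $A(X)$ --- the claim is unproven for a general sublattice with $A = A^{\loc}$. In cases (i)(b) and (i)(c) you acknowledge the problem, but the bounded-surrogate repair cannot succeed: even if $\tilde u$ and $T\tilde u$ are both bounded above and below by positive constants, the product of a nonconstant bounded Lipschitz function $\tilde u$ with an unbounded $f\in \Lip(X)$ is in general not Lipschitz (e.g.\ $(2+\sin x)\cdot x$ on $\R$), so $S$ is still not defined on all of $\Lip(X)$; moreover no construction of $\tilde u$ is actually given. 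This is a genuine gap, not a missing detail.

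The paper sidesteps the closure problem entirely: it defines $F(X) = \{f/u : f\in A(X)\}$ and $F(Y) = \{g/Tu : g\in A(Y)\}$ as \emph{new} function spaces, observes that these are adequate (the quotient of a sublattice by a strictly positive continuous function in it is again a sublattice containing constants and separating points from closed sets --- no multiplicative closure of $A(X)$ is required), and applies Theorem \ref{thm6} to $S : F(X)\to F(Y)$ rather than Theorem \ref{thm16.0} to a map on $A(X)$. The price is that one must then show by hand that the resulting homeomorphism of the $F$-compactifications carries $X$ into $Y$: for cases (i)(a) and (ii) this follows from Corollary \ref{cor9} because $F(Y)$ is again a local space, but for $\Lip(Y)$ the paper runs a separate argument via Proposition \ref{prop8.0} and a dichotomy on whether $(Tu(y_n))$ stays bounded along a separated sequence, exploiting the estimate $d(y_m,y_n)\geq C|Tu(y_m)-Tu(y_n)|$ to build a contradicting Lipschitz function on disjoint balls. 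That final step has no counterpart in your outline and is where the real content of the unbounded Lipschitz case lies.
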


\begin{proof}
Let $u = 1_X+ T^{-1}1_Y$.  Then $u\in A(X)$ and $u \geq 1_X, Tu \geq 1_Y$. Define
\[
F(X) = \bigl\{\frac{f}{u}: f \in A(X)\bigr\} \quad\text{and}\quad
F(Y) = \bigl\{\frac{g}{Tu}: g\in A(Y)\bigr\}.\] If $A(X)$ is a
sublattice of $C(X)$ that separates points from closed sets, then
$F(X)$ is a sublattice of $C(X)$ that contains constants and
separates points from closed sets.  If $A(X) = C^p(X)$, then $F(X) =
C^p(X)$.  If $A(X) = C^p_b(X)$, then $u$ is bounded and $\geq 1_X$.
Hence $F(X) = C^p_b(X)$.  Thus, in all cases, $F(X)$ is an adequate
subspace of $C(X)$.  The same considerations apply to $F(Y)$. Define
a map $S: F(X) \to F(Y)$  by $Sf = T(uf)/Tu$ for all $f\in F(X)$. It
is easy to check that $S$ is a linear isomorphism such that $S1_X =
1_Y$.  From the remark following Theorem \ref{thm6}, we see that
$S({F}_b(X)) = {F}_b(Y)$. Denote the $F(X)$- and $F(Y)$-
compactifications of $X$ and $Y$ respectively by $\cF X$ and $\cF Y$
respectively. By Theorem \ref{thm6}, there is a homeomorphism $h:
\cF X \to \cF Y$ such that $Sf= \hat{f}\circ {h^{-1}}_{|Y}$ for all
$f \in F(X)$. Thus
\begin{equation}\label{eq2}
Tf(y) = Tu(y)\cdot (f/u)\hat{}(h^{-1}(y))\quad \text{ for all $f\in A(X)$ and all $y \in Y$}.
\end{equation}
If $h$ is a homeomorphism from $X$ onto $Y$, then it follows from
(\ref{eq2}) that $T1_X(y) = Tu(y)/u(h^{-1}(y))$ and hence $Tf(y) =
T1_X(y)\cdot f(h^{-1}(y))$. The result for $T^{-1}g$ can be obtained
similarly.  By symmetry, it remains to show that $h(X) \subseteq Y$.

If $A(Y) = A^{\loc}(Y)$ or $A^{\loc}_b(Y)$ (including  the cases
where $A(Y) = C^p(Y)$ or $C^p_b(Y)$), then $F(Y) = F^{\loc}(Y)$ or
$F^{\loc}_b(Y)$ (note that in the latter case $Tu$ is bounded and
also bounded away from $0$). So by Corollary \ref{cor9}, $h(X)
\subseteq Y$.

Finally, we consider case (a)(ii).  For case (a)(iii), a similar
argument works using the metric $D$. Suppose that $Y$ is complete
and $A(Y) = \Lip(Y)$ or $\Lip_b(Y)$.  Let $x_0 \in X$ and $y_0 =
h(x_0)$. By Proposition \ref{prop8.0}, there exists a sequence
$(y_n)$ in $Y$ that converges to $y_0$ in $\cF Y$. If $(y_n)$ has a
Cauchy subsequence in $Y$, then $y_0 \in Y$.  Otherwise, by using a
subsequence, we may assume that $(y_n)$ is separated. In the first
instance, suppose that $(Tu(y_n))$ has a bounded subsequence.  Then
we may assume without loss of generality that $(Tu(y_n))$ converges
to a real number $a$. Applying (\ref{eq2}) with $y = y_n$ and taking
limit, we see that $\lim Tf(y_n) = a \cdot f(x_0)/u(x_0) \in \R$ for
any $Tf \in A(Y)$. But since $(y_n)$ is a separated sequence in $Y$
and $\Lip_b(Y) \subseteq A(Y)$, this is not true. Hence it must be
that $(Tu(y_n))$ diverges to $\infty$. By choosing a subsequence if
necessary, we may assume that $Tu(y_n) > 4Tu(y_m)$ if $m < n$. Note
that there is a constant $C >0$ such that $d(y_m,y_n) \geq
C|Tu(y_m)-Tu(y_n)|$.  Then
\[ d(y_m,y_n) \geq \frac{C}{2}(Tu(y_m)+Tu(y_n))\quad\text{if $m < n$}.\]
Hence the balls $B(y_n, \frac{C}{2}Tu(y_n))$ are pairwise disjoint.
Furthermore, as $Tu$ is unbounded, $A(Y) = \Lip(Y)$.  In this case,
it is easy to construct a function $g$ in $\Lip(Y)$ such that
$g(y_{2n-1}) = \frac{C}{2}Tu(y_{2n-1})$ and $g(y_{2n}) = 0$.
However, it follows from (\ref{eq2}) that $\lim Tf(y_n)/Tu(y_n) =
f(x_0)/u(x_0)$ for any $f \in A(X)$.  Taking $f = T^{-1}g$ yields a
contradiction.
\end{proof}

\end{document}